\newcommand{\eqq}[2]{\begin{equation}  #1  \label{#2}
\end{equation}    }
\def\divv{\operatorname{div}}
\newcommand{\hd}{\hspace{0.2cm}}
\newcommand{\no}{\noindent}
\newcommand{\m}[1]{\mbox{#1}}
\newcommand{\rr}{\mathbb{R}}
\newcommand{\NN}{\mathbb{N}}
\newcommand{\lap}{\Delta}
\newcommand{\lapk}{\Delta^{2}}
\newcommand{\nal}{\nabla \Delta}
\newcommand{\la}{\lambda}
\newcommand{\laf}{\tilde{\lambda}}
\newcommand{\nab}{\nabla}
\newcommand{\nabd}{\nabla^{2}}
\newcommand{\nabt}{\nabla^{3}}
\newcommand{\jd}{\frac{1}{2}}
\newcommand{\ddt}{\frac{d}{dt}}
\newcommand{\be}[1]{\left|  #1 \right|}
\newcommand{\bk}[1]{\left|  #1 \right|^{2}}
\newcommand{\nd}[1]{\| #1  \|_{2}}
\newcommand{\ns}[1]{\| #1  \|_{6}}
\newcommand{\nsk}[1]{\| #1  \|_{6}^{2}}
\newcommand{\nt}[1]{\| #1  \|_{3}}
\newcommand{\ntk}[1]{\| #1  \|_{3}^{2}}
\newcommand{\nc}[1]{\| #1  \|_{4}}
\newcommand{\nck}[1]{\| #1  \|_{4}^{2}}
\newcommand{\ncc}[1]{\| #1  \|_{4}^{4}}
\newcommand{\ndk}[1]{\| #1  \|_{2}^{2}}
\newcommand{\nif}[1]{\| #1  \|_{\infty}}
\newcommand{\nifk}[1]{\| #1  \|_{\infty}^{2}}
\newcommand{\nsod}[1]{\| #1 \|_{2,2} }
\newcommand{\nsot}[1]{\| #1 \|_{3,2} }
\newcommand{\nsotk}[1]{\| #1 \|_{3,2}^{2} }
\newcommand{\nsodk}[1]{\| #1 \|_{2,2}^{2} }
\newcommand{\nsodt}[1]{\| #1 \|_{2,2}^{3} }
\newtheorem{rem}{{\textbf {Remark}}}
\newtheorem{lem}{{\textbf {Lemma}}}
\newtheorem{theorem}{\textbf {Theorem}}
\newcommand{\vt}{\partial_{t} v}
\newcommand{\ot}{\partial_{t} \omega}
\newcommand{\bt}{\partial_{t} b}
\newcommand{\om}{\omega}
\newcommand{\Om}{\Omega}
\newcommand{\bno}{\frac{b}{\om}}
\newcommand{\dvk}{|D(v)|^{2}}
\newcommand{\bmi}{b_{\min}}
\newcommand{\omi}{\om_{\min}}
\newcommand{\oma}{\om_{\max}}
\newcommand{\bmt}{b_{\min}^{t}}
\newcommand{\bmtm}{(\bmt)^{-1}}
\newcommand{\omt}{\om_{\min}^{t}}
\newcommand{\ommt}{\om_{\max}^{t}}
\newcommand{\omtm}{(\omt)^{-1}}
\newcommand{\omtd}{(\omt)^{-2}}
\newcommand{\mnit}{\mu^{t}_{\min}}
\newcommand{\kd}{\kappa_{2}}
\newcommand{\bmit}{\frac{\bmi}{\left( 1 + \kd \oma t \right)^{\frac{1}{\kd}}}}
\newcommand{\omitt}{\frac{\omi}{1 + \kd \omi t}}
\newcommand{\omat}{\frac{\oma}{1 + \kd \oma t}}
\newcommand{\V}{\mathcal{V}}
\newcommand{\Vd}{\V^{2}}
\newcommand{\Vt}{\V^{3}}
\newcommand{\Vj}{\V^{1}}
\newcommand{\Vkd}{\dot{\V}_{\divv}}
\newcommand{\Vkdj}{\dot{\V}_{\divv}^{1}}
\newcommand{\Vkdd}{\Vkd^{2}}
\newcommand{\Vkdt}{\Vkd^{3}}
\newcommand{\n}[1]{ \left( #1  \right)}
\newcommand{\vl}{v^{l}}
\newcommand{\ol}{\om^{l}}
\newcommand{\bl}{b^{l}}
\newcommand{\vlt}{\partial_{t}v^{l}}
\newcommand{\olt}{\partial_{t}\om^{l}}
\newcommand{\blt}{\partial_{t}b^{l}}
\newcommand{\mul}{\mu^{l}}
\newcommand{\dvlk}{|D(\vl)|^{2}}
\newcommand{\cli}{c_{i}^{l}}
\newcommand{\eli}{e_{i}^{l}}
\newcommand{\dli}{d_{i}^{l}}
\newcommand{\clit}{\frac{d}{dt}c_{i}^{l}}
\newcommand{\dlit}{\frac{d}{dt}d_{i}^{l}}
\newcommand{\Pst}{\Psi_{t}}
\newcommand{\Pht}{\Phi_{t}}
\newcommand{\pst}{\psi_{t}}
\newcommand{\pht}{\phi_{t}}
\newcommand{\into}{\int_{\Omega}}
\newcommand{\dvl}{D(\vl)}
\newcommand{\nabk}{\nabla^{2}}
\newcommand{\phol}{\pht( \ol)}
\newcommand{\pholp}{\pht'( \ol)}
\newcommand{\psbl}{\pst( \bl)}
\newcommand{\psblp}{\pst'( \bl)}
\newcommand{\Phol}{\Pht( \ol)}
\newcommand{\Psbl}{\Pst( \bl)}
\newcommand{\ts}{t^{*}}
\newcommand{\cs}{C_{*}}
\newcommand{\cgs}{C^{*}}
\newcommand{\tskd}{\ts_{K,\delta}}
\newcommand{\sla}{\rightharpoonup}
\newcommand{\zb}{\longrightarrow}
\newcommand{\slg}{\overset{*}{\rightharpoonup}}
\newcommand{\mup}{\mu_{\Pst \Pht}}
\newcommand{\bp}{b_{+}}
\newcommand{\bm}{b_{-}}
\newcommand{\op}{\om_{+}}
\newcommand{\ou}{\om_{-}}
\newcommand{\itt}{\int_{0}^{t}}
\newcommand{\eqns}[1]{
\begin{eqnarray}
\begin{split}
#1
\end{split}
\end{eqnarray}
}
\newcommand{\eqnsl}[2]{
\begin{equation}
\label{#2}
\begin{split}
#1
\end{split}
\end{equation}
}
\newcommand{\ep}{\varepsilon}
\begin{document}

\title{Local in time solution to Kolmogorov's two-equation model of turbulence}

\author{Przemys\l aw Kosewski${}^{\ast}$, Adam Kubica\footnote{Department of Mathematics and Information Sciences, Warsaw University of Technology, ul. Koszykowa 75,
00-662 Warsaw, Poland, E-mail addresses: A.Kubica@mini.pw.edu.pl, P.Kosewski@mini.pw.edu.pl} }

\maketitle

\abstract{We prove the existence of local in time solution to  Kolmogorov's two-equation model of turbulence in three dimensional domain with periodic boundary conditions. We apply Galerkin method for appropriate truncated problem. Next, we obtain estimates for a limit of approximate solutions to ensure that it satisfies the original problem.   }

\vspace{0.3cm}

\no Keywords: Kolmogorov's two-equation model of turbulence, local in time solution, Galerkin method.

\vspace{0.2cm}

\no AMS subject classifications (2010): 35Q35, 76F02.

\section{Introduction}

Firstly,  we will provide a short introduction to turbulence modeling. We introduce an idea behind RANS (Reynolds Averaged Navier Stokes, see \cite{WILCOX}, \cite{RANS4}, \cite{RANS7}, \cite{TurbMod}) and explain the necessity of incorporating additional equations to model turbulence. Next, we will introduce Kolmogorov's two equation model and its connection to currently used turbulence models.

Turbulent flow is a fluid motion characterized by rapid changes in velocity and pressure. These fluctuations cause difficulties mainly in finding solutions using numerical methods, which require dense mesh and very short time steps to properly reproduce the turbulent flow. Additionally, turbulences appear to be self-similar and display a chaotic behaviour. This bolster a need for precise simulations.

The simplest idea that would decrease the apparent fluctuations of solutions is to consider the average value of the velocity and of the pressure. This is the case in RANS, where the average is taken with respect to the time. Now, let us decompose the velocity $v$ and \m{pressure $p$:}
\[
v(x,t) = \overline{v}(x,t) + \widetilde{v}(x,t) , \hd \hd
p(x,t) = \overline{p}(x,t) + \widetilde{p}(x,t) ,
\]
where $\overline{v}$, $\overline{p}$ are time-averaged values and $\widetilde{v}$, $\widetilde{p}$ are fluctuations. We substitute the decomposed functions into the Navier Stokes system and we get (for details see chapter 2 of \cite{WILCOX}).
\[
\partial_{t} \overline{v}
+ \overline{v} \cdot \nab \overline{v}
- \nu \divv D \overline{v}
+ \nab \overline{p}
=
- \divv \n{ \overline{\widetilde{v} \cdot  \widetilde{v}}}.
\]
The last term on the right hand side can be approximated by Boussinesq  approximation (see \cite{WILCOX})
\[
-\overline{\widetilde{v} \cdot \widetilde{v}}
=
\nu_T (\nab \overline{v} + \nab^T \overline{v} ) - \frac{2}{3} k I,
\]
where $ \nu_T = \frac{k}{\om}$, $k$ is the tubulent kinetic energy and  $\om$ is the  dissipation rate. Finaly, we obtain
\eqq{
\partial_{t} \overline{v}
+ \overline{v} \cdot \nab \overline{v}
- \nab \cdot \left( ( \nu + \nu_T)D \overline{v} \right)
+ \nab \left( \overline{p} + \frac{2}{3} k \right)
=
0.
}{ransEqv}
We see that to close the system we need to introduce additional equations for $\om$ and $k$.
For further details see \cite{WILCOX} and \cite{RANS7}.

Nowadays, $k-\varepsilon$ and $k - \omega$ are two of the most commonly used models to calculate $k$ and $\om$.
They bear a strong resemblance to Kolmogorov's turbulence model in the way they deal with diffusive terms. In both models, the equation on $k$  uses a  squared matrix norm of the symmetric gradient as a source term.

In 1941 A. N. Kolmogorov introduced following system of equations describing turbulent flow (\cite{Kolmog}, English translation in Appendix A \cite{Spal})
\eqq{\vt + \divv(v\otimes v) - 2 \nu_{0} \divv \n{ \bno  D(v)} = - \nabla p  , }{pa}
\eqq{ \ot + \divv(\om v ) - \kappa_{1} \divv \n{ \bno \nabla \om } = - \kappa_{2} \om^{2},   }{pb}
\eqq{\bt + \divv(b v   )   - \kappa_{3} \divv \n{  \bno \nabla b }  = - b \om + \kappa_{4} \bno \dvk,   }{pc}
\eqq{\divv{v} =0,}{pd}
where $v$ is the  mean velocity, $\om$ is the dissipation rate, $b$ represents  2/3 of the  mean kinetic energy, $p$ is the sum of the mean pressure and $b$. The novelty of Kolmogorov's formulation is that it no longer requires prior knowledge of the length scale (size of large eddies) - it can be calculated as $\frac{\sqrt{b}}{\om}$. Let us  notice that the proposed equation on velocity  highly resembles the equation (\ref{ransEqv}), which appeared in RANS. The $k-\varepsilon$ and $k - \omega$ systems provide similar equations for $ \om $ and $b$ with the addition of a source term in the equation for $ \om $.

The physical motivation of the proposed system can be found in \cite{Spal} and \cite{BuM}.  A mathematical analysis of the difficulties that occur in proving the existence of solutions of such a system can also be found in \cite{BuM}.

Now, we would like to discus the known mathematical results related to Kolmogorov's two-equation model of turbulence. There are two recent results  devoted to this problem: \cite{BuM} and  \cite{MiNa} (see the announcement \cite{MiNaa}) and our result is inspired by them. In the first one, the Authors consider the system in a bounded  $C^{1,1}$ domain with mixed boundary conditions for $b$ and $\om$ and a stick-slip boundary condition for the velocity $v$. In order to overcome the difficulties related with the last term on the right hand side of (\ref{pc}) the problem is reformulated and the quantity $E:=\frac{1}{2}|v|^{2}+ \frac{2\nu_{0}}{\kappa_{4}}b$ is introduced. Then, the equation (\ref{pc}) is replaced by
\[
\partial_{t}E+ \divv(v(E+p))- 2\nu_{0}\divv\left( \frac{\kappa_{3} b}{\kappa_{4}\om}\nab b+ \frac{b}{\om} D(v)v \right)+\frac{2\nu_{0}}{\kappa_{4}}b\om=0.
\]
The existence of global-in-time weak solution of the reformulated problem is established. It is also worth mentioning that in \cite{BuM} the assumption related to the initial value of $b$ tolerates the vanishing of $b_{0}$ in some points of the domain. More precisely, the existence of weak solution is proved under the conditions $b_{0}\in L^{1}$, $b_{0}>0$ a.e. and $\ln{b_{0}}\in L^{1}$.

In the article  \cite{MiNa} the Authors consider the system (\ref{pa})-(\ref{pd}) in a periodic domain.  The existence of global-in-time weak solution is proved, but due to the presence of the strongly nonlinear term $\bno \dvk$,  the weak form of equation (\ref{pc}) has to be corrected by a positive measure $\mu$, which is zero, if the  weak solution is sufficiently regular. There are also  estimates for $\om$ and $b$ (see (4.2) in \cite{MiNa}). These observations are crucial in our reasoning presented below. Concerning to the initial value of $b$, the assumption is  that $b_{0}$ is uniformly positive.

\section{Notation and main result.}

Assume that $\Omega = \prod_{i=1}^{3}(0,L_{i}) $, \hd $L_{i}$, $T>0$ and $\Omega^{T}=\Omega \times (0,T)$.  We  shall consider the following problem
\eqq{\vt + \divv(v\otimes v) -  \nu_{0} \divv \n{ \bno  D(v)} = - \nabla p  , }{a}
\eqq{ \ot + \divv(\om v ) - \kappa_{1} \divv \n{ \bno \nabla \om } = - \kappa_{2} \om^{2},   }{b}
\eqq{\bt + \divv(b v   )   - \kappa_{3} \divv \n{  \bno \nabla b }  = - b \om + \kappa_{4} \bno \dvk,   }{c}
\eqq{\divv{v} =0,}{d}
in $\Omega^{T}$ with periodic boundary condition on $\partial \Omega$ and initial condition
\eqq{v_{|t=0}= v_{0}, \hd \hd \om_{|t=0}= \om_{0}, \hd \hd b_{|t=0}= b_{0}.}{e}
Here $\nu_{0}, \kappa_{1}, \dots, \kappa_{4}$ are positive constants.  For simplicity, we assume further that all constants except $\kappa_{2}$ are equal to one. The reason is that the constant $\kappa_{2}$ plays an important role in the a priori estimates.

\no We shall show the local-in-time existence of regular solution of problem (\ref{a})-(\ref{e}) under some assumption imposed on the initial data. Namely, suppose that there exists positive numbers $\bmi$, $\omi$, $\oma$ such that
\eqq{0<\bmi\leq b_{0}(x) ,  }{f}
\eqq{0<\omi\leq \om_{0}(x) \leq \oma  }{g}
on $\Omega$ and we set
\eqq{
\begin{array}{c}
\bmt = \bmit,  \hd \hd
\omt = \omitt, \hd \\ \\
\ommt = \omat, \hd \hd
\mnit = \frac{1}{4}\frac{\bmt}{\ommt}.
\end{array} }{h}
If $m\in \mathbb{N}$, then   by $\V^{m}$ we denote the space of restrictions to $\Om$ of the functions, which belong to the space
\eqq{\{u \in H^{m}_{loc}(\rr^{3}): \hd  u(\cdot + kL_{i}e_{i}) = u(\cdot ) \hd \m{ for } \hd k\in \mathbb{Z}, \hd  i=1,2,3 \},}{j}
where $\{ e_{i}\}_{i=1}^{3}$ form a standard basis in $\rr^{3}$. Next, we define
\eqq{\Vkd^{m} = \{v\in \V^{m}:\hd \divv v =0, \hd \int_{\Om} vdx =0  \}.}{k}
We shall find the solution of the system (\ref{a})-(\ref{d}) such that $(v,\om,b)\in \mathcal{X}(T)$, where
\eqq{\mathcal{X}(T)= L^{2}(0,T;\Vkdt)\times L^{2}(0,T;\Vt))\times (L^{2}(0,T;\Vt) \cap (H^{1}(0,T;H^{1}(\Om)))^{5}.   }{i}
We shall denote by $\| \cdot \|_{k,2}$ the norm in the  Sobolev space, i.e.
\eqq{
\| f \|_{k,2} = (\ndk{\nab^{k} f } + \ndk{f})^{\frac{1}{2}},
}{norSob}
where $\| \cdot \|_{2}$ is $L^{2}$ norm on $\Om$.

Now, we introduce the notion of solution to the system (\ref{a})-(\ref{d}). We shall show that for any $v_{0}\in \Vkd^{2}$ and strictly positive $\om_{0}$, $b_{0}\in \V^{2}$ there exist  positive $T$ and $(v, \om, b)\in \mathcal{X}(T)$ such that
\eqq{(\vt, w) - (v\otimes v,\nabla w) +    \n{ \mu  D(v), D(w)} = 0  \hd \m{ for } \hd w\in \Vkd^{1}, }{aa}
\eqq{ (\ot, z) - (\om v, \nabla z  ) + \n{ \mu \nabla \om, \nabla z  } = - \kd (\om^{2}, z ) \hd \m{ for } \hd z\in \V^{1},    }{ab}
\eqq{(\bt, q) - (b v, \nab q   )   +\n{  \mu  \nabla b, \nabla q  }  = - (b \om , q) + ( \mu \dvk, q) \hd \m{ for } \hd q\in \V^{1},  }{ac}
for a.a. $t\in (0,T)$, where $\mu= \frac{b}{\om}$ and (\ref{e}) holds. Recall that  $D(v)$ denotes the symmetric part of $\nabla v$ and $(\cdot , \cdot )$ is the inner product in $L^{2}(\Om)$.

Our main result concerning the existence of local in time regular solutions is as follows.

\begin{theorem}
Suppose  that  $\om_{0}$, $b_{0}\in \V^{2}$,  $v_{0}\in \Vkd^{2}$ and  (\ref{f}), (\ref{g}) are satisfied. Then there exist positive $\ts$ and $(v,\om , b)\in \mathcal{X}(\ts)$ such that  (\ref{aa})-(\ref{ac}) hold for a.a. $t\in (0,\ts)$ and (\ref{e}) is satisfied.
Furthermore, for each $(x,t)\in \Om\times [0,\ts)$ the following estimates
\eqq{\frac{\omi}{1+\kd \omi t} \leq \om(x,t) \leq \frac{\oma}{1+\kd \oma t},}{newc}
\eqq{\frac{\bmi}{(1+\kd \oma t)^{\frac{1}{\kd}}}  \leq b(x,t) }{newd}
hold. The time of existence of the solution is estimated from below in the following sense: for each positive $\delta$ and compact $K\subseteq \{ (a,b,c): 0<a\leq b, \hd 0<c \}$ there exists positive $\tskd$, which depends only on $\kd, \Om, \delta$ and $K$ such that if
\eqq{  \nsodk{v_{0}}+\nsodk{\om_{0}}+\nsodk{b_{0}}\leq \delta  \hd \m{ and } \hd (\omi, \oma, \bmi)\in K,}{uniformly}
then $\ts\geq \tskd$. The Sobolev norm is defined by (\ref{norSob}).
\label{main}
\end{theorem}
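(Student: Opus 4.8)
The plan is to build the solution through a truncated Galerkin scheme and then pass to the limit, arranging the truncation so that the pointwise bounds (\ref{newc})--(\ref{newd}) force it to be inactive on the time interval we produce. The degeneracy of the system sits entirely in the coefficient $\mu=\bno$, which is singular where $\om$ is small and couples all three equations. I would therefore first replace $\mu$ by a bounded, uniformly positive, time-dependent coefficient $\mup=\frac{\Psbl}{\Phol}$, where $\Pht,\Pst$ are cut-offs equal to the identity on the strips $\omt\le\om\le\ommt$ and $\bmt\le b\le M$ anticipated from the a priori analysis, and clamped outside so that $\mnit\le\mup\le C(M)$ unconditionally. With this modification (\ref{b})--(\ref{c}) become uniformly parabolic convection--diffusion equations and (\ref{a}) a Navier--Stokes system with coercive variable viscosity, so the estimates below close without any circular dependence on the bounds themselves.

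For the truncated system I would run the Galerkin method, using the eigenfunctions of the Stokes operator as a basis of $\Vkd$ for $v$ and the eigenfunctions of $-\lap$ with periodic conditions as a basis of $\V$ for $\om$ and $b$. Writing $\vl,\ol,\bl$ as finite combinations with coefficients $\cli,\eli,\dli$ and projecting (\ref{aa})--(\ref{ac}), one obtains an ODE system whose right-hand side is locally Lipschitz in the unknowns (the truncation guarantees this), so Carath\'eodory's theorem provides a local solution. The task is then to bound $(\vl,\ol,\bl)$ in $\mathcal{X}$ uniformly in $l$.

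The a priori estimates proceed in two stages. First the pointwise bounds: since $\divv\vl=0$ kills the convective term, testing the $\ol$-equation against $(\ol-\ommt)_{+}$ and against $(\omt-\ol)_{+}$ and comparing with the solution $\omat$ of $\dot y=-\kd y^{2}$ gives $\omt\le\ol\le\ommt$; feeding the upper bound into the $\bl$-equation and comparing with $\dot z=-z\,\ommt$ gives $\bl\ge\bmt$. These are exactly (\ref{newc})--(\ref{newd}), and together with the $L^{\infty}$ control furnished by the embedding $\V^{2}\hookrightarrow L^{\infty}(\Om)$ in $\rr^{3}$ they render the cut-offs inactive on $[0,\ts)$, i.e.\ $\mup=\bno$ there. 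Second, the higher-order estimate: testing (\ref{aa}) with $\vl$ and $-\lapt\vl$, and (\ref{ab})--(\ref{ac}) with the pairs $\ol,\lapk\ol$ and $\bl,\lapk\bl$, while estimating the convective and variable-viscosity commutator terms by Sobolev embeddings in $\rr^{3}$, I expect a differential inequality $\ddt Y\le C(1+Y)^{p}$ for $Y=\nsotk{\vl}+\nsodk{\ol}+\nsodk{\bl}$, with $C$ depending only on $\kd$, $\Om$ and the bounds coming from $K$. Integrating yields a uniform existence time $[0,\ts)$ with $\ts\ge\tskd$ depending only on $\kd,\Om,\delta,K$, which is precisely the lower bound asserted at the end of the theorem.

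The main obstacle is the passage to the limit in the strongly nonlinear source $\kc\bno\dvk$ of (\ref{ac}), and the closing of the energy estimate for this same term. Controlling $\nabk(\mup\dvlk)$ in the $H^{2}$ estimate for $\bl$ is where the high regularity $v\in\Vkdt$ is needed: $\V^{3}\hookrightarrow W^{1,\infty}(\Om)$ makes $\dvlk$ bounded, while the lower bound on $\Phol$ keeps $\mup$ under control. For the limit I would use the uniform bounds $\vl\in L^{2}(0,\ts;\Vkdt)$ and $\vlt\in L^{2}(0,\ts;H^{1}(\Om))$ together with the Aubin--Lions lemma to extract strong convergence of $\nab\vl$ in $L^{2}(\OmT)$, whence $\dvlk\to\dvk$ strongly; combined with the strong convergence of $\ol,\bl$ and the uniform positivity of $\Phol$, this identifies the limit of every product, including $\mup\dvlk\to\bno\dvk$. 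Passing to the limit in (\ref{aa})--(\ref{ac}) then gives a solution of the truncated problem which, by the preserved bounds (\ref{newc})--(\ref{newd}), coincides with a solution of the original system (\ref{a})--(\ref{d}) on $\Om^{\ts}$.
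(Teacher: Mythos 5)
There is a genuine gap, and it sits exactly where your plan claims to be non-circular: you propose to prove the pointwise bounds (\ref{newc})--(\ref{newd}) \emph{at the Galerkin level}, by testing the $\ol$-equation with $(\ol-\ommt)_{+}$ and $(\omt-\ol)_{+}$, and then to conclude that the cut-offs are inactive already for the approximations, so that $\mul=\frac{\bl}{\ol}$ on $[0,\ts)$. This step fails: in the Galerkin scheme the equations (\ref{cb})--(\ref{cc}) hold only against test functions in $\mathrm{span}\{z_{1},\dots,z_{l}\}$, and a truncation such as $(\ol-\ommt)_{+}$ does not belong to that span; moreover spectral projections do not preserve order, so $\ol$ can genuinely become negative or exceed $\ommt$ no matter what the continuous maximum principle suggests. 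For the same reason your untruncated source term is dangerous: with $\ol$ possibly negative, $-\kd\int_{\Om}(\ol)^{3}dx$ has no sign and even the basic $L^{2}$ estimate does not close --- which is why the paper truncates the sources as well, replacing $\om^{2}$ by $\pht^{2}(\ol)$ and $b\om$ by $\pst(\bl)\phol$, so that $\pht^{2}(\ol)\,\ol\geq 0$ and $\pst(\bl)\phol\,\bl\geq 0$ hold unconditionally. The correct order of reasoning (the paper's) is the reverse of yours: the truncations $\Pst,\Pht,\pst,\pht$ are designed so that coercivity $\mnit\leq\mul$ and the sign conditions hold with \emph{no} pointwise information; the uniform $H^{2}$/$L^{2}H^{3}$ estimates and the time $\ts$ (hence $\tskd$) are derived for the truncated scheme, with $\nsod{\mul}$ controlled polynomially by $\nsod{\bl}$ and $\nsod{\ol}$ rather than by any clamp from above; one passes to the limit via Aubin--Lions, obtaining uniform convergence and $\mul\to\mup$; and only then are the bounds proved, by testing the \emph{limit} equations --- where arbitrary $\V^{1}$ test functions are admissible --- with $(\om-\omt)_{-}$, $(\om-\ommt)_{+}$ and $(b-\bmt)_{-}$. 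Only at that final stage does one conclude $\pht(\om)=\om$, $\pst(b)=b$, $\mup=\frac{b}{\om}$, i.e.\ that the limit solves the original system.

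Two secondary problems follow from the same misordering. First, your upper clamp $\mup\leq C(M)$ on the anticipated strip $\bmt\leq b\leq M$ requires a pointwise \emph{upper} bound on $b$ to be deactivated, which neither the theorem asserts nor the paper proves (only the lower bound (\ref{newd}) is available); worse, $C(M)$ would enter the constants of the very estimates that are supposed to produce $M$, a circularity the paper avoids by clamping $\Pst$ only from below and leaving $\Pst(x)=x$ for all $x\geq\bmt$. Second, your treatment of the term $\mul\dvlk$ in the $H^{2}$ estimate for $\bl$ invokes $\V^{3}\hookrightarrow W^{1,\infty}(\Om)$ to make $\dvlk$ bounded, which presumes $\vl\in L^{\infty}(0,\ts;H^{3})$ --- a bound that is not available (only $L^{2}(0,\ts;H^{3})$ is, and your own $Y$ would have to be propagated at the $H^{3}$ level for $v$, dragging in $\nabla^{3}\mul$ and hence third derivatives of $\bl,\ol$). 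The paper instead stays at the $H^{2}$ level for all three unknowns and uses Gagliardo--Nirenberg interpolation, e.g.\ $\nif{\nab \vl}\leq C\nd{\nabt \vl}^{\frac{1}{2}}\ns{\nab \vl}^{\frac{1}{2}}$, together with Young's inequality, absorbing $\ndk{\nabt\vl}$ into the dissipation --- which is precisely why the three second-order inequalities must be summed before integrating the resulting inequality of the form $\ddt Y\leq C(1+\kd\oma t)^{\bar{\beta}}(1+Y)^{15}$, from which both $\ts$ and the uniform lower bound $\tskd$ of (\ref{uniformly}) are read off.
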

We note that the last part of the theorem is needed for proving the existence of global in time solution for small data.  We address this issue in another paper.

In the next section we prove the above theorem by applying Galerkin method for an appropriate truncated problem. We obtain a priori estimates for the sequence of approximate solutions and by a weak-compactness argument we get a solution of the truncated problem. Finally, after proving some bounds for $\om$ and $b$ we deduce that the obtained solution satisfies the original system of equations.

\section{Proof of the main result}
The proof of theorem~\ref{main} is based on  Galerkin method. Hence, we need a basis of the spaces $\V^{1}$ and $\Vkd^{1}$. Let $\{ w_{i}\}_{i\in \NN}$ be a system of eigenfunctions  of Stokes operator in  $\Vkd^{1}$, which 
is  complete and orthogonal  in $\Vkd^{1}$ and orthonormal in $L^{2}(\Om)$ (see chap.~II.6 in \cite{Temam}). In particular, $\{ w_{i} \}_{i\in \NN}$ are smooth (see formula (6.17), chap. II in \cite{Temam}). By $\{ \lambda_{i} \}_{i\in \NN}$ we denote the corresponding system of eigenvalues.  Similarly, let $\{ z_{i} \}_{i\in \NN}$ be an complete and orthogonal system in $\V^{1}$, which is orthonormal in $L^{2}(\Om)$, which is obtained by taking eigenvectors  of the minus Laplace operator. The system of corresponding eigenvalues is denoted  by $\{ \laf_{i}  \}_{i\in \NN}$.
 We shall find approximate solutions of (\ref{aa})-(\ref{ac}) in the following form
\eqq{\vl(t, x)= \sum_{i=1}^{l}  \cli(t)w_{i}(x), \hd \hd  \ol(t, x)= \sum_{i=1}^{l}  \eli(t)z_{i}(x), \hd  \hd \bl(t, x)= \sum_{i=1}^{l}  \dli(t)z_{i}(x).}{ad}
We have to determine the coefficients $\{ \cli \}_{i=1}^{l}$, $\{ \eli \}_{i=1}^{l}$ and $\{ \dli \}_{i=1}^{l}$. In order to define an approximate problem we have to introduce a few auxiliary functions. For fixed $t>0$ we denote by $\Pst= \Pst(x)$ a smooth function such that
\eqq{
\Pst(x)= \left\{
\begin{array}{rll}
\frac{1}{2}\bmt & \m{ for } & x<\frac{1}{2}\bmt, \\ x &  \m{ for } & x\geq\bmt,\\
\end{array}  \right. }{defPsi}
where  $\bmt$ is defined by (\ref{h}).  We assume that the function $\Pst $ also satisfies
\eqq{ 0\leq \Pst'(x)\leq c_{0}, \hd |\Pst''(x)|\leq c_{0} (\bmt)^{-1},  }{estiPsi}
where, here and  $c_{0}$ is a constant independent on $x$ and $t$ (see in the appendix for details (formula (\ref{defPsik})). We  also need smooth functions $\Pht$, $\pst$ and $\pht$ such that
\eqq{ \Pht(x)=\left\{
\begin{array}{rll}
\frac{1}{2}\omt & \m{ for } & x< \frac{1}{2}\omt,\\
x  & \m{ for } & x \in [ \omt, \ommt], \\
2\ommt  & \m{ for } & x >  2\ommt,\\
\end{array} \right. }{defPhi}
\eqq{
\pst(x)= \left\{
\begin{array}{rll}
0 & \m{ for } & x<\frac{1}{2}\bmt, \\ x &  \m{ for } & x \geq \bmt,\\
\end{array}  \right. }{defpsi}
\eqq{ \pht(x)=\left\{
\begin{array}{rll}
0 & \m{ for } & x< \frac{1}{2}\omt,\\
x  & \m{ for } & x\geq \omt.\\
\end{array} \right. }{defphi}
We assume that these functions additionally satisfy
\eqq{ 0\leq \Pht'(x)\leq c_{0}, \hd |\Pht''(x)|\leq c_{0} (\omt)^{-1}, }{estiPhi}
\eqq{\pst(x)\leq x \hd \m{ for } \hd  x\geq 0  , \hd \hd  0\leq \pst'(x)\leq c_{0} \hd \m{ for } \hd  x\in \rr , }{estipsi}
\eqq{\pht(x)\leq x \hd \m{ for } \hd  x\geq 0 ,  \hd \hd  0\leq \pht'(x)\leq c_{0} \hd \m{ for } \hd  x\in \rr , }{estiphi}
for some constant $c_{0}$ (the construction of $\Pht$, $\pst$ and $\pht$ are similar to argument from the appendix).

An approximate solution will be found  in the form (\ref{ad}), where the coefficients $\{ \cli \}_{i=1}^{l}$, $\{ \eli \}_{i=1}^{l}$ and $\{ \dli \}_{i=1}^{l}$ are determined by the following truncated system
\eqq{(\vlt, w_{i}) - (\vl\otimes \vl,\nabla w_{i}) +    \n{ \mul  D(\vl), D(w_{i})} = 0 , }{ca}
\eqq{ (\olt, z_{i}) - (\ol \vl, \nabla z_{i}  ) + \n{ \mul \nabla \ol, \nabla z_{i}  } = - \kd (\pht^{2}(\ol), z_{i} ),  }{cb}
\eqq{(\blt, z_{i}) - (\bl \vl, \nab z_{i}   )   +\n{  \mul  \nabla \bl, \nabla z_{i}  }  = - (\pst(\bl)\pht( \ol) , z_{i}) + ( \mul \dvlk, z_{i}),  }{cc}
\[
c^{l}_{i}(0)= (v_{0},w_{i}), \hd e^{l}_{i}(0)= (\om_{0},z_{i}), \hd d^{l}_{i}(0)= (b_{0},z_{i}),
\]
where $i\in\{1,\dots , l \}$ and we denote
\eqq{\mul = \frac{\Pst(\bl)}{\Pht(\ol)}. }{cd}
In the computations below, the exponent $l$ systematically refers to this Galerkin approximation.
\begin{rem}
We emphasize that in order to control the second derivatives of approximated solutions we need the conditions (\ref{estiPhi})-(\ref{estiphi}). In particular, we can not apply piecewise linear functions.
\end{rem}

Firstly, we note that $\mul$ is positive and then, by standard ODE theory the system  (\ref{ca})-(\ref{cc}) has a local-in-time solution. Now, we shall obtain an estimate independent on $l$.

\begin{lem}
The approximate solutions obtained above  satisfies the following estimates
\eqq{  \ddt \ndk{\vl } + 2\mnit  \ndk{ D(\vl)}\leq 0,}{cf}
\eqq{ \ddt \ndk{\ol } + 2\mnit  \ndk{ \nabla \ol}\leq 0 ,}{ch}
\eqq{\ddt \ndk{\bl } + 2\mnit  \ndk{ \nabla \bl}\leq  2\nif{ \bl} \nif{ \mul} \ndk{ \nab \vl }, }{ci}
where $\mnit$ is defined by (\ref{h}).
\label{energyesti}
\end{lem}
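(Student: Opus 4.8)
The plan is to obtain each of the three estimates by testing the corresponding truncated equation with the approximate solution itself. Since $\vl = \sum_{i=1}^{l}\cli w_{i}$, multiplying (\ref{ca}) by $\cli$ and summing over $i=1,\dots,l$ is the same as taking $w_{i}=\vl$ in the bilinear forms; likewise, multiplying (\ref{cb}) and (\ref{cc}) by $\eli$ and $\dli$ and summing amounts to testing with $\ol$ and $\bl$. This produces three scalar identities to which I apply the same sequence of observations. The time-derivative terms give the energies, $(\vlt,\vl)=\jd\ddt\ndk{\vl}$ and analogously for $\ol$ and $\bl$, while all three convective terms vanish because the velocity basis is divergence free, so that $\divv\vl=0$: indeed $(\vl\otimes\vl,\nab\vl)=\jd\into\vl\cdot\nab|\vl|^{2}=-\jd\into(\divv\vl)|\vl|^{2}=0$, and the same integration by parts gives $(\ol\vl,\nab\ol)=(\bl\vl,\nab\bl)=0$.

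The diffusive terms are handled through a uniform lower bound on $\mul$. From (\ref{defPsi}) together with the monotonicity $\Pst'\geq 0$ in (\ref{estiPsi}), the function $\Pst$ attains its minimum $\jd\bmt$, while from (\ref{defPhi}) and $\Pht'\geq 0$ in (\ref{estiPhi}) the function $\Pht$ is bounded above by $2\ommt$; hence $\mul=\Pst(\bl)/\Pht(\ol)\geq \mnit$ by (\ref{h}). Consequently $\n{\mul D(\vl),D(\vl)}\geq\mnit\ndk{D(\vl)}$, $\n{\mul\nab\ol,\nab\ol}\geq\mnit\ndk{\nab\ol}$, and similarly for $\bl$; these are exactly the dissipative terms appearing in (\ref{cf})--(\ref{ci}).

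It remains to control the reaction terms, and this is the step where the construction of the cut-off functions is essential. For (\ref{ch}) I must show $-\kd(\pht^{2}(\ol),\ol)\leq 0$, and for (\ref{ci}) that $-(\pst(\bl)\pht(\ol),\bl)\leq 0$. Because the coefficients are not yet known to keep $\ol$ and $\bl$ positive, the key point is that by (\ref{defphi}), (\ref{defpsi}) and (\ref{estiphi}), (\ref{estipsi}) the functions $\pht$ and $\pst$ are nonnegative and vanish on $(-\infty,\jd\omt)$ and $(-\infty,\jd\bmt)$ respectively. Thus wherever $\pht(\ol)\neq 0$ one has $\ol>0$, and wherever $\pst(\bl)\neq 0$ one has $\bl>0$, so the integrands $\pht^{2}(\ol)\ol$ and $\pst(\bl)\pht(\ol)\bl$ are pointwise nonnegative and the two reaction terms have the required nonpositive sign. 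For the final source term in (\ref{cc}) I estimate $(\mul\dvlk,\bl)\leq\nif{\mul}\nif{\bl}\ndk{D(\vl)}\leq\nif{\mul}\nif{\bl}\ndk{\nab\vl}$, using $|D(\vl)|\leq|\nab\vl|$ pointwise. Collecting the bounds and multiplying each identity by $2$ yields (\ref{cf})--(\ref{ci}).

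The main obstacle is precisely this sign control of the reaction terms together with the lower bound on $\mul$: both rely entirely on the specific truncation design of $\Pst,\Pht,\pst,\pht$, and it is here that the absence of any a priori positivity of $\ol,\bl$ or boundedness of $\mul$ is compensated by the cut-offs. Everything else is the standard energy method, the only remaining routine checks being the integration-by-parts identities for the convective terms (valid since the $w_{i}$ are divergence free) and the elementary pointwise inequality $|D(\vl)|\leq|\nab\vl|$ used to pass from $\ndk{D(\vl)}$ to $\ndk{\nab\vl}$ in (\ref{ci}).
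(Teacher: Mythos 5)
Your proof is correct and follows essentially the same route as the paper: testing (\ref{ca})--(\ref{cc}) with $\vl,\ol,\bl$, killing the convective terms via $\divv \vl=0$, using the truncation bounds $\Pst(\bl)\geq\jd\bmt$ and $\Pht(\ol)\leq 2\ommt$ to get $\mul\geq\mnit$, the pointwise nonnegativity of $\pht^{2}(\ol)\ol$ and $\pst(\bl)\pht(\ol)\bl$ for the reaction terms, and H\"older with $|D(\vl)|\leq|\nab\vl|$ for the source term in (\ref{ci}). You in fact spell out several steps (the sign argument via the supports of $\pst,\pht$ and the derivation of $\mul\geq\mnit$) that the paper leaves implicit.
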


\begin{proof}
We multiply (\ref{ca}) by $\cli$, sum over $i$ and we obtain
\[
\jd  \ddt \ndk{\vl } + (\mul D(\vl), D(\vl))=0,
\]
where we used (\ref{ad}). Applying the properties of functions $\Pst$, $\Pht$ and (\ref{h}) we get
\eqq{
\jd  \ddt \ndk{\vl } + \mnit  \ndk{ D(\vl)}\leq 0.
}{ce} Similarly, we multiply (\ref{cb}) by $\eli$ and we obtain
\[
\jd \ddt \ndk{ \ol} + (\mul \nabla \ol , \nabla \ol) = - \kd (\pht^{2}(\ol), \ol).
\]
By the properties of $\pht$ the right-hand side is non-positive thus, we obtain  (\ref{ch}). Finally, after multiplying (\ref{cc}) by $\dli$ we get
\[
\jd \ddt \ndk{ \bl} + (\mul \nabla \bl , \nabla \bl )= - (\pst(\bl)\pht(\ol), \bl) + (\mul \dvlk, \bl).
\]
We note that $\pst(\bl)\pht(\ol) \bl \geq 0$ hence, we obtain
\[
\jd \ddt \ndk{ \bl} + \mnit \ndk{ \nabla \bl} \leq (\mul \dvlk, \bl ) \leq  \nif{ \bl} \nif{ \mul} \ndk{ \nab \vl }
\]
and the proof is finished.
\end{proof}
We also need the higher order estimates.
\begin{lem}
There exist positive $\ts$ and $\cs$,  which depend on $\bmi$, $\omi$, $\oma$,  $\Om$, $\kd$, $c_{0}$, $\nsod{v_{0}}$, $\nsod{\omega_{0}}$ and  $\nsod{b_{0}}$ such that for each $l\in \NN$ the following estimate
\eqq{\| \vl, \ol, \bl  \|_{L^{\infty}(0, \ts; H^{2}(\Om))}+\| \vl, \ol, \bl  \|_{L^{2}(0,\ts;H^{3}(\Om))}      +
\| \vlt, \olt, \blt  \|_{L^{2}(0,\ts;H^{1}(\Om))}
 \leq \cs}{fa}
holds.

Furthermore, for each positive $\delta$ and compact \m{$K\subseteq \{ (a,b,c): 0<a\leq b, \hd 0<c \}$} there exists positive $\tskd$, which depends only on $\kd, \Om, \delta$ and $K$ such that if
\eqq{  \nsodk{v_{0}}+\nsodk{\om_{0}}+\nsodk{b_{0}}\leq \delta  \hd \m{ and } \hd (\omi, \oma, \bmi)\in K,}{uniformly}
then $\ts\geq \tskd$.
\label{energyhigher}
\end{lem}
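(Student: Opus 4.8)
The plan is to close a differential inequality for
\[
Y(t):=\nsodk{\vl}+\nsodk{\ol}+\nsodk{\bl},
\]
the combined squared $H^{2}$-norms, on a short time interval, using the coercivity of the approximate diffusion, and then to read off (\ref{fa}) together with the uniform lower bound on the existence time. The decisive preliminary observation is that the truncations make the coefficient $\mul=\Psbl/\Phol$ uniformly elliptic: since $\jd\omt\le\Phol\le 2\ommt$ and $\Psbl\ge\jd\bmt$, and $\Pst,\Pht,\pst,\pht$ have first and second derivatives bounded through (\ref{estiPsi})--(\ref{estiphi}), there are constants $0<c_{1}\le c_{2}$, depending only on $c_{0},\bmt,\omt,\ommt$, with $c_{1}\le\mul\le c_{2}$ on $\Om\times[0,\ts]$, together with
\[
\be{\nab\mul}\le C\n{\be{\nab\bl}+\be{\nab\ol}},\qquad
\be{\nabd\mul}\le C\n{\be{\nabd\bl}+\be{\nabd\ol}+\bk{\nab\bl}+\bk{\nab\ol}}.
\]
It is exactly this uniform ellipticity, unavailable for the untruncated problem, that makes the high-order energy method work.

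Next I would exploit that $\{w_{i}\}$ and $\{z_{i}\}$ are eigenfunctions. Multiplying (\ref{ca}) by $\la_{i}^{2}\cli$, (\ref{cb}) by $\laf_{i}^{2}\eli$ and (\ref{cc}) by $\laf_{i}^{2}\dli$ and summing over $i$ amounts, because $-\lap w_{i}=\la_{i}w_{i}$ and $-\lap z_{i}=\laf_{i}z_{i}$ (on periodic divergence-free mean-zero fields $-P\lap=-\lap$), to testing the three equations against $\lapk\vl$, $\lapk\ol$ and $\lapk\bl$. With the periodic boundary conditions the time-derivative terms become $\jd\ddt\ndk{\lap\vl}$, $\jd\ddt\ndk{\lap\ol}$ and $\jd\ddt\ndk{\lap\bl}$, which deliver the $L^{\infty}_{t}H^{2}$ part of (\ref{fa}); after one integration by parts the principal parts of the diffusion terms become $\into\mul\bk{D(\lap\vl)}\,dx$, $\into\mul\bk{\nal\ol}\,dx$ and $\into\mul\bk{\nal\bl}\,dx$, which, using Korn's inequality for $\vl$ and $\mul\ge c_{1}$, dominate the $H^{3}$-seminorms and deliver the $L^{2}_{t}H^{3}$ part. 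Everything else is moved to the right-hand side.

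I would then estimate the right-hand side so that each third-order factor is absorbed into the corresponding $H^{3}$-dissipation by Young's inequality with a small parameter, the remainder being a polynomial in $Y$. Three groups appear: the convective terms $\divv(\vl\otimes\vl)$, $\divv(\ol\vl)$ and $\divv(\bl\vl)$; the commutators in which derivatives from $\lapk$ fall on $\mul$, producing factors $\nab\mul$ and $\nabd\mul$ against high derivatives of $\vl,\ol,\bl$; and the zeroth-order sources $\kd\pht^{2}(\ol)$, $\psbl\phol$ and, above all, the production term $\n{\mul\dvlk}$ in (\ref{cc}). All are handled with the three-dimensional embeddings $H^{2}\hookrightarrow L^{\infty}$, $H^{1}\hookrightarrow L^{6}$ and interpolation. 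For example, one integration by parts recasts the production contribution, up to lower order, as $\into\mul\,D(\vl)\,\nabd\vl\,\nal\bl\,dx$, which is bounded by $C\,\nif{\mul}\,\ns{D(\vl)}\,\nt{\nabd\vl}\,\nd{\nal\bl}$; here $\nd{\nal\bl}$ is absorbed into the $\bl$-dissipation, while $\ns{D(\vl)}\le C\nsod{\vl}$ and $\nt{\nabd\vl}\le C\nsod{\vl}^{1/2}\nsot{\vl}^{1/2}$, the surviving half-power of the $\vl$-dissipation being absorbed by Young once more. Collecting all terms yields
\[
\ddt Y(t)+c\n{\nsotk{\vl}+\nsotk{\ol}+\nsotk{\bl}}\le C\n{1+Y(t)}^{N},
\]
with $c>0$ and $C,N$ depending only on $c_{0},\kd,\Om,\bmt,\omt,\ommt$.

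Finally, the scalar comparison for $y'\le C(1+y)^{N}$ shows that $Y$ stays below a fixed multiple of $Y(0)+1$ on an interval $[0,\ts]$ whose length depends only on $Y(0)=\nsodk{v_{0}}+\nsodk{\om_{0}}+\nsodk{b_{0}}$ and on $C,N$; integrating the dissipation over $[0,\ts]$ gives the $L^{2}_{t}H^{3}$ bound, and rearranging the equations with this bound gives the $L^{2}_{t}H^{1}$ bound for $\vlt,\olt,\blt$, which completes (\ref{fa}). For the uniform statement I would note that on $[0,\ts]$ the quantities $\bmt,\omt,\ommt$ in (\ref{h}) remain in ranges fixed by $(\omi,\oma,\bmi)\in K$ and by $\ts$, so that $c,C,N$ may be chosen uniformly once $(\omi,\oma,\bmi)\in K$ and $Y(0)\le\delta$; the comparison then gives $\ts\ge\tskd$ with $\tskd$ depending only on $\kd,\Om,\delta$ and $K$. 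I expect the main obstacle to be precisely the production term $\n{\mul\dvlk}$ and the variable-coefficient commutators: they tie the $\bl$- and $\ol$-estimates to the highest derivatives of $\vl$, and closing the inequality is possible only because the truncations keep $\mul$ and its derivatives bounded, so that every third derivative can be matched to the right dissipation with a small constant while the remaining factors assemble into a genuine power of $Y$.
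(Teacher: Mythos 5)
Your scheme is, in outline, exactly the paper's: testing the Galerkin system with $\la_{i}^{2}\cli$, $\laf_{i}^{2}\eli$, $\laf_{i}^{2}\dli$ (equivalently against $\lapk\vl$, $\lapk\ol$, $\lapk\bl$), using the identity $\into\bk{\lap D(\vl)}dx=\frac{1}{2}\into\bk{\nabt\vl}dx$ and the lower bound $\mul\geq\mnit$ for coercivity, absorbing third derivatives by Gagliardo--Nirenberg and Young, closing a polynomial differential inequality for $Y(t)$, and reading off both (\ref{fa}) and the uniform time $\tskd$ from a scalar comparison; the time-derivative bounds by testing with $\clit$, $-\la_{i}\clit$, etc.\ also match the paper. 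But your ``decisive preliminary observation'' is false, and it is stated as the load-bearing step. The truncation does \emph{not} make $\mul$ uniformly elliptic from above: by (\ref{defPsi}), $\Pst(x)=x$ for $x\geq\bmt$, so only a lower cut-off is imposed on $\Psbl$, which grows linearly in $\bl$; the best pointwise bound is $\mul\leq 2\omtm\max\{\bmt,\bl\}$ (the paper's (\ref{mulestibe})), and there is no constant $c_{2}$ depending only on $c_{0},\bmt,\omt,\ommt$ with $\mul\leq c_{2}$. For the same reason your claimed pointwise bounds on $\nab\mul$ and $\nabd\mul$ are wrong: differentiating $\Psbl(\Phol)^{-1}$ produces the terms $\Psbl(\Phol)^{-2}\nab\Phol$, $2\Psbl(\Phol)^{-3}\bk{\nab\Phol}$ and $\Psbl(\Phol)^{-2}\nabd\Phol$, which carry a factor $\be{\bl}$ against $\be{\nab\ol}$, $\bk{\nab\ol}$ and $\be{\nabd\ol}$ --- compare the paper's (\ref{muleq})--(\ref{mulesta}), where these terms appear explicitly. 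The paper instead proves the norm estimate (\ref{ea}), $\nsod{\mul}\leq c_{3}Q_{1}(1+\kd\oma t)^{\max\{3,1+\frac{1}{\kd}\}}(1+\nsodt{\bl}+\nsodt{\ol})$, and feeds it into the right-hand side, so that $\nif{\mul}$, $\nab\mul$, $\nabd\mul$ all enter as powers of $Y$. Since your final inequality tolerates an arbitrary polynomial exponent $N$, your argument survives this correction with a larger $N$; but the auxiliary lemma you assert, and your closing claim that ``the truncations keep $\mul$ and its derivatives bounded,'' misidentify why the estimate closes: only the lower bound on $\mul$ is uniform.

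A second, smaller defect concerns uniformity in time. Your constants $c,C,N$ depend on $\bmt,\omt,\ommt$, i.e.\ on the time horizon, while $\ts$ is to be extracted from those very constants --- a circularity you acknowledge only implicitly (``in ranges fixed by $K$ and by $\ts$''). The paper avoids this by keeping the explicit weights $(1+\kd\oma t)^{\bar{\beta}}$ in the differential inequality (\ref{caloscGro}) and defining $\ts$ as the unique solution of the equation (\ref{defts}); monotonicity of that equation in $\nsodk{v_{0}}+\nsodk{\om_{0}}+\nsodk{b_{0}}\leq\delta$ and in $(\omi,\oma,\bmi)\in K$ then yields $\ts\geq\tskd$ directly. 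Your version can be repaired by fixing a preliminary horizon (say $T_{0}=1$) and working on $[0,\min(\ts,T_{0})]$, but as written the uniform lower bound on the existence time is not yet justified.
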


Before we go to the proof of Lemma~\ref{energyhigher} we present
 its idea. First, we test the equation for approximate solution by its bi-Laplacian. Next, after integration by parts we obtain (\ref{cff}), (\ref{cg}) and (\ref{chh}). Further, we apply the lower bound for the ''diffusive coefficient'' $\mul$ (see (\ref{estimu})) and use the H\"older and Gagliardo-Nirenberg inequalities which leads to (\ref{sumvlolbl}). To estimate the $H^{2}$-norm of $\mul$ we use the properties of $\Pst$ and $\Pht$. After applying the energy estimates from Lemma~\ref{energyesti} we obtain (\ref{caloscGro}), which leads to a uniform bound of the $H^{2}$-norm of the sequence of approximate solution on the interval $(0,t^{*})$ for some positive $t^{*}$ (see (\ref{eg})). Immediately it gives a bound in $L^{2}H^{3}$. The last step is the $l$-independent estimate of the time derivative of the approximate solution.

\begin{proof}
We multiply the equality (\ref{ca}) by $\la_{i}^{2}\cli$ and sum over $i$
\[
(\vlt, \lapk \vl ) - (\vl \otimes \vl , \nabla \lapk \vl )+ (\mul D(\vl ), D(\lapk \vl ))=0.
\]
After integrating by parts we obtain
\[
(\vlt, \lapk \vl ) =  \jd \ddt \ndk{ \lap \vl },
\]
\[
(\vl \otimes \vl , \nabla \lapk \vl )=  ( \lap (\vl \otimes \vl) , \nal \vl ),
\]

\[
(\mul D(\vl ), D(\lapk \vl )) = ( \lap \mul D(\vl), \lap D(\vl))
\]
\[
+ 2 (\nab \mul \cdot \nab \dvl ,\lap \dvl) + (\mul \lap \dvl ,\lap \dvl ).
\]
Thus, we get
\[
\jd \ddt \ndk{\lap \vl } + \into \mul |\lap \dvl |^{2}dx
\]
\[
 = -( \lap (\vl \otimes \vl) , \nal \vl ) - ( \lap \mul D(\vl), \lap D(\vl)) - 2 (\nab \mul \cdot \nab \dvl ,\lap \dvl).
\]
We estimate the right-hand side
\[
|( \lap (\vl \otimes \vl) , \nal \vl ) |\leq   \nif{\vl } \nd{ \nabd\vl } \nd{ \nabt\vl } + \nck{ \nab \vl } \nd{\nabt \vl } .
\]
Proceeding analogously we obtain
\[
\jd \ddt \ndk{ \lap \vl } + \into \mul | \lap \dvl |^{2}dx
\]
\[
\leq  \nif{\vl } \nd{ \nabd\vl } \nd{ \nabt\vl } + \nck{ \nab \vl } \nd{\nabt \vl }
\]
\eqq{
+  \Big( \nd{ \lap \mul  \dvl } + 2 \nd{\nab \mul \cdot \nab \dvl } \Big)\nd{\lap \dvl}.
}{cff}
Now, we multiply the equation (\ref{cb}) by $\laf_{i}^{2} \eli$ and we obtain
\[
(\olt, \lapk \ol) - (\ol \vl, \nabla \lapk \ol  ) + \n{ \mul \nabla \ol, \nabla \lapk \ol  } = -  \kappa_{2}(\pht^{2}(\ol), \lapk \ol ).
\]
After integrating by parts we get
\[
(\olt, \lapk \ol) = \jd \ddt \ndk{ \lap \ol },
\]
\[
(\ol \vl, \nabla \lapk \ol ) = (\lap \ol  \vl , \nal \ol)+ 2(\nab \vl \nab \ol  , \nal \ol) + (\ol \lap \vl , \nal \ol),
\]
\[
\n{ \mul \nabla \ol, \nab \lapk \ol  } = \n{\lap \mul \nab \ol, \nal \ol  } + 2\n{ \nabk \ol \nab \mul , \nal \ol  }
 + \n{ \mul \nal \ol, \nal \ol },
\]
\eqns{
-(\pht^{2}(\ol), \lapk \ol )
= 2\n{ \phol \pholp \nab \ol, \nal \ol }
}
Thus, we may write
\[
\jd \ddt \ndk{\lap \ol } + \into \mul \bk{\nal \ol } dx
\]
\[
\leq \left( \nd{\lap  \ol \vl }+ \nd{ \nab \vl \nab \ol } +\nd{\ol \lap \vl }+
\nd{\lap \mul \nab \ol }  \right. \hspace{4cm}
\]
\eqq{
\left. \hspace{2cm} + 2 \nd{\nabk \ol \nab \mul  } + 2\kd\nd{\phol \pholp \nab \ol} \right) \nd{ \nal \ol}.
}{cg}
Finally, after  multiplying (\ref{cc}) by $\laf^{2}_{i}\dli$  we obtain
\[
(\blt, \lapk \bl) - (\bl \vl, \lapk \nab \bl   )   +\n{  \mul  \nab \bl, \nab \lapk \bl }
\]
\[
 = - (\pst(\bl)\pht( \ol) , \lapk \bl) + ( \mul \dvlk, \lapk \bl).
\]
We deal with the terms on the left hand-side as  earlier and for the right-hand side terms  we get
\[
-(\pst(\bl)\pht( \ol) , \lapk \bl)  = \left( \psblp   \phol \nab \bl, \nal \bl  \right)
+ \left( \psbl \pholp \nab \ol  , \nal \bl   \right),
\]
\[
( \mul \dvlk, \lapk \bl) = -(\dvlk \nab \mul  , \nal \bl ) -  (\mul \nab (\dvlk)  , \nal \bl).
\]
Therefore, we obtain the inequality
\[
\jd \ddt \ndk{\lap \bl } + \into \mul \bk{\nal \bl } dx
\]
\[
\leq  \Big( \nd{\lap \bl \vl }+ 2\nd{\nab \vl  \nab \bl } + \nd{ \bl \lap \vl } +
\nd{\lap \mul \nab \bl }  +  2\nd{\nabk \bl \nab \mul  } + \nd{\phol \psblp \nab \bl}
\]
\eqq{
 + \nd{\psbl \pholp \nab \ol} + \nd{ \nab \mul \bk{ \dvl } } +  \nd{\mul  |\dvl| |\nab \dvl|  }
 \Big) \nd{ \nal \bl}.
}{chh}
We note that
\eqq{\into  \bk{ \lap \dvl  } dx = \frac{1}{2} \into \bk{ \nabt \vl } dx. }{Korn}
Indeed,  integrating by parts yield
\[
2\into  \bk{ \lap \dvl  } dx  = \sum_{k,m}\into \bk{  \lap v^{l}_{k,x_{m}} } dx + \into \lap v^{l}_{k,x_{m}} \cdot \lap v^{l}_{m,x_{k}} dx
\]
\[
=\sum_{k,m,p,q}\into    v^{l}_{k,x_{m}x_{p}x_{p}} \cdot  v^{l}_{k,x_{m}x_{q}x_{q}}  dx + \sum_{k,m,p,q}\into \lap v^{l}_{k,x_{k}} \cdot \lap v^{l}_{m,x_{m}} dx
\]
\[
=\sum_{k,m,p,q}\into    \bk{ v^{l}_{k,x_{m}x_{p}x_{q}} } dx,
\]
where we applied the condition $\divv{ \vl} =0$ and used the tensor notation for components and derivatives. After applying (\ref{h}), (\ref{defPsi}), (\ref{defPhi}) and (\ref{cd}) we get
\eqq{
\mnit \leq \mul
}{estimu}
for each $l$ thus, (\ref{cff}) together with (\ref{Korn}) and (\ref{estimu}) give
\[
 \ddt \ndk{\lap \vl } + \mnit  \ndk{ \lap \dvl }
\]
\eqq{
 \leq  \frac{32}{\mnit} \Big( \nifk{\vl } \ndk{ \nabk\vl } + \ncc{ \nab \vl } + \ndk{  \lap \mul  \dvl  } +   \ndk{\nab  \mul \cdot \nab \dvl}  \Big).
}{cfa}
Applying Gagliardo-Nirenberg interpolation inequality
\eqq{\nif{ \nab \vl} \leq C \nd{ \nabt \vl  }^{\frac{1}{2}} \ns{  \nab \vl }^{\frac{1}{2}} }{Gaginfi}
and Sobolev embedding inequality we get
\[
\ndk{  \lap \mul  \dvl  } \leq \ndk{  \lap \mul} \nifk{  \dvl  } \leq C \nd{ \nabt \vl } \nsod{ \vl } \nsod{\mul}^{2}
,
\]
where $C$ depends only on $\Omega$. Again, by Gagliardo-Nirenberg inequality
\eqq{\nt{ \nabd \vl }   \leq  C \nd{ \nabt \vl }^{\frac{1}{2}} \nd{  \nabk \vl }^{\frac{1}{2}} }{Gagl3}
and H\"older inequality we have
\[
 \ndk{\nab  \mul \cdot \nab \dvl} \leq \nsk{\nab \mu }\ntk{\nabk \vl  } \leq C \nd{ \nabt \vl } \nsod{   \vl } \nsodk{ \mul}.
\]
Thus, applying after the Young inequality with exponents $(2,6,3)$ we get
\eqq{
\ndk{  \lap \mul  \dvl  }+ \ndk{\nab  \mul \cdot \nab \dvl} \leq \ep \ndk{ \nabt \vl }+\frac{C}{\ep } ( \nsod{ \vl }^{6} + \nsod{\mul}^{6}),
}{zGag}
where $\ep>0$ and $C$ depends only on $\Om$. Applying the above inequality and (\ref{Korn}) in (\ref{cfa}) we obtain
\eqq{
 \ddt \ndk{\nabk \vl } + \mnit  \ndk{ \nabt \vl }
 \leq  \frac{C}{\mnit} \Big( \| \vl \|^{4}_{2,2}+ (\mnit)^{-2}( \| \vl \|_{2,2}^{6}+ \| \mul \|_{2,2}^{6})  \Big),
}{zcfa}
where $C=C(\Om)$. Now, we proceed similarly with (\ref{cg}) and we obtain
\[
 \ddt \ndk{\lap \ol } + \mnit \ndk{ \nal \ol} \leq \frac{C}{\mnit}
\Big(
\nifk{ \vl} \ndk{\nabk \ol} + \nck{ \nab \vl} \nck{ \nab \ol} + \nifk{\ol} \ndk{ \nabk \vl}
\]
\eqq{+\ndk{ \lap \mul  \nab \ol} + \ndk{\nabk \ol  \nab \mul   }
+ \kappa_{2}^{2} c_{0}^{2} \nifk{\ol}  \ndk{\nab \ol} \Big),}{cgg}
where we applied (\ref{estiphi}). We repeat the reasoning leading to (\ref{zGag}) and we obtain
\[
\ndk{ \lap \mul  \nab \ol} + \ndk{\nabk \ol  \nab \mul   } \leq  \ep \ndk{ \nabt \ol }+\frac{C}{\ep } ( \nsod{ \ol }^{6} + \nsod{\mul}^{6}).
\]
Thus, the above inequality and (\ref{cgg}) give
\[
\ddt \ndk{\nabk \ol } + \mnit \ndk{ \nabt \ol}
\]
\eqq{
 \leq \frac{C}{\mnit}
\Big(
 \| \vl \|_{2,2}^{4}+ (1+\kappa_{2}^{4}c_{0}^{4} ) \| \ol \|_{2,2}^{4}+  (\mnit)^{-2}( \| \ol \|_{2,2}^{6}+ \| \mul \|_{2,2}^{6})  \Big),}{zcgg}
where $C=C(\Om)$. Further, from (\ref{chh}) we get
\[
\ddt \ndk{\lap \bl } + \mnit \ndk{ \nal \bl } \leq \frac{C}{\mnit} \Big(  \nifk{ \vl } \ndk{ \nabk \bl } + \nck{ \nab \vl} \nck{ \nab \bl } + \nifk{\bl} \ndk{ \nabk \vl}
\]
\[
 +
\ndk{\nabk \mul \nab \bl }  +  \ndk{\nabk \bl \nab \mul  } + c_{0}^{2} \nifk{ \ol} \ndk{ \nab \bl}
\]
\[
 + c_{0}^{2} \nifk{ \bl} \ndk{ \nab \ol} + \ndk{ \nab \mul |\dvl|^{2} } +  \ndk{\mul \nab(|\dvl|^{2}) }\Big),
\]
where we applied (\ref{estipsi}) and (\ref{estiphi}). Applying integrating by parts and Sobolev embedding theorem we get
\[
\ddt \ndk{\nabk \bl } + \mnit \ndk{ \nabt \bl } \leq \frac{C}{\mnit} \Big(  \nsod{ \vl }^{4}+ \nsod{  \bl }^{4} +
\ndk{\nabk \mul \nab \bl }  +  \ndk{\nabk \bl \nab \mul  }
\]
\eqq{
 + c_{0}^{4} \nsod{ \ol}^{4} +\nsod{\mul }^{6} +\nsod{\vl }^{6} +  \ntk{\nabk \vl   } \nsod{ \mul }^{2} \nsod{ \vl }^{2} \Big),
}{nabzGag}
Applying again the Gagliardo-Nirenberg inequality and Young inequality we get
\[
\ndk{\nabk \mul \nab \bl }  +  \ndk{\nabk \bl \nab \mul  } \leq \ep \| \nabt \bl \|_{2}^{2} +  \frac{C}{\ep}( \| \bl \|_{2,2}^{6}+ \| \mul \|_{2,2}^{6}).
\]
From (\ref{Gagl3}) we get
\[
\ntk{\nabk \vl   }  \nsod{ \vl }^{2} \nsod{ \mul }^{2}\leq C\nd{\nabt \vl   }\nsod{ \vl }^{3} \nsod{ \mul }^{2} \leq \ep \ndk{\nabt \vl   }+\frac{C}{\ep}( \nsod{ \vl }^{10}+ \nsod{ \mul }^{10}).
\]
hence, from (\ref{nabzGag}) we obtain the following  estimate
\[
\ddt \ndk{\nabk \bl } + \mnit \ndk{ \nabt \bl } \leq \frac{C}{\mnit} \Big(  \nsod{ \vl }^{4}+ \nsod{  \bl }^{4} + c_{0}^{4} \nsod{ \ol}^{4} +\nsod{\mul }^{6} +\nsod{\vl }^{6} \Big)
\]
\eqq{ +\frac{C}{(\mnit)^{3}}  \Big(  \nsod{\bl}^{6}+ \nsod{\mul}^{6}+ \nsod{\vl}^{10}+ \nsod{\mul}^{10}    \Big)  +\frac{\mnit}{2}  \ndk{\nabt \vl   }, }{nablzGag}
where $C=C(\Om)$. We sum the inequalities (\ref{zcfa}), (\ref{zcgg}), (\ref{nablzGag}) and we obtain
\[
\ddt \Big( \ndk{\nabk \vl } +\ndk{\nabk \ol } + \ndk{\nabk \bl }  \Big) + \mnit  \Big( \ndk{\nabt \vl } +\ndk{\nabt \ol } + \ndk{\nabt \bl }  \Big)
\]
\[
\leq \frac{C}{\mnit} \Big(  \nsod{ \vl }^{4}+ \nsod{  \bl }^{4} + (1+c_{0}^{4}+c_{0}^{4}\kappa_{2}^{4}) \nsod{ \ol}^{4} +\nsod{\mul }^{6} +\nsod{\vl }^{6} \Big)
\]
\eqq{+\frac{C}{(\mnit)^{3}} \Big(  \nsod{ \vl }^{6}+ \nsod{  \bl }^{6} +  \nsod{ \ol}^{6} +\nsod{\mul }^{6} +\nsod{\vl }^{10}+\nsod{\mul }^{10} \Big) }{sumvlolbl_old}
for some $C$, which depends only on $\Om$. We note that
\eqq{
\mnit = \frac{1}{4} \frac{\bmi}{\oma}(1+\kd \oma t )^{1-\frac{1}{\kd}}
}{mutmineq}
hence, we have
\[
\ddt \Big( \ndk{\nabk \vl } +\ndk{\nabk \ol } + \ndk{\nabk \bl }  \Big) + \mnit  \Big( \ndk{\nabt \vl } +\ndk{\nabt \ol } + \ndk{\nabt \bl }  \Big)
\]
\eqq{
\leq C \n{\frac{\oma}{\bmi}+ \n{\frac{\oma}{\bmi}}^{3} }\n{1 + \kd \oma t}^{\beta } \Big( 1 +  \nsod{  \bl }^{6} +  \nsod{ \ol}^{6} +\nsod{\mul }^{10} +\nsod{\vl }^{10} \Big),
}{sumvlolbl}
where $\beta=  \max \{  \frac{1}{\kd}-1,  \frac{3}{\kd}-3 \}$ and $C$ depends only on $\Om$, $c_{0}$ and $\kd$.

Now, we shall estimate $\mul$ in terms of  $\ol$ and $\bl$. Firstly, we note that from (\ref{defPsi}) and (\ref{defPhi}) we have
\eqq{\Psbl \leq \max\{\jd \bmt , \bl   \}, \hd \hd \Phol \geq \jd \omt.}{da}
Hence, by definition (\ref{cd}) we get
\eqq{0<\mul \leq 2 \omtm \max\{\bmt, \bl  \}\leq c_1(\Omega) \frac{1}{\omi}  \n{1 + \kd \omi t}  \n{ \bmi +	|\bl| } ,}{mulestibe}
where $c_{1}$ depends only on $\Omega$. Thus, we obtain
\eqq{
\nd{\mul} \leq c_{1}\frac{1}{\omi} \n{1 + \kd \omi t} (\bmi + \nd{\bl}).    }{mulifty}
Now, we have to estimate the derivatives of $\mul$. Direct calculation gives
\[
| \nabk \mul| = \be{\nabk \n{ \Psbl \cdot (\Phol)^{-1}  }} \leq (\Phol)^{-1} \be{\nabk (\Psbl)}
\]
\[
+2(\Phol)^{-2} \be{\nab (\Psbl)} \be{\nab( \Phol) }
\]
\eqq{
+2\Psbl (\Phol)^{-3} \be{\nab( \Phol) }^2 + \Psbl (\Phol)^{-2} \be{\nabk( \Phol) } .
}{muleq}
Using (\ref{estiPsi}) and (\ref{estiPhi}) we may estimate the derivatives
\eqq{\be{\nab (\Psbl ) } \leq c_{0} \be{ \nab \bl }, \hd \hd  \be{\nab (\Phol ) } \leq c_{0} \be{ \nab \ol }, }{db}
\eqq{\begin{array}{l}\vspace{0.2cm} \be{\nabk (\Psbl ) } \leq c_{0}\bmtm   \bk{ \nab \bl } + c_{0} \be{ \nabk \bl }, \hd \hd  \\ \be{\nabk (\Phol ) } \leq c_{0}\omtm   \bk{ \nab \ol } + c_{0} \be{ \nabk \ol }. \end{array} }{dc}
If we apply estimates (\ref{da}), (\ref{db}) and (\ref{dc}) in (\ref{muleq}) then we obtain
\[
\be{\nabk \mul } \leq c_{2} Q_{1} \n{1+ \kd \oma t }^{\max \{ 3, 1 + \frac{1}{\kd } \} }  \left[ \be{\nab \bl }^{2}  + \be{\nabk \bl } +|\bl | \be{  \nab \ol}^{2}
\right.
\]
\eqq{
\left.+ \be{\nab \bl }+ \be{\nab \ol }+ \be{\nab \ol }^{2} + \be{\bl \nabk \ol} + \be{\nabk \ol}  \right]  }{mulesta}
where  $c_{2}$ depends only on $c_{0}$ and $Q_{1}=\frac{\bmi}{\omi}\n{1+\bmi^{-3}+\omi^{-3}  } $. Thus, we obtain
\[
\nd{\nabk \mul }\leq c_{2}Q_{1}\n{1+ \kd \oma t }^{\max \{ 3,1 + \frac{1}{\kd } \}  }  \left[ \nck{\nab\bl  } +  \nd{\nabk \bl }  \right.
\]
\eqq{  \left. +  \nif{\bl} \nck{\nab \ol } + \nck{ \nab \ol} + \nd{ \nabk \ol }  + \nif{ \bl } \nd{\nabk \ol }  \right]  .     }{estimuldiff}
If we take into account (\ref{mulifty}) then we get
\eqq{\nsod{ \mul} \leq c_{3} Q_{1}  \n{1+ \kd \oma t }^{\max \{ 3,1 + \frac{1}{\kd } \} } \left(  \nsodt{ \bl} + \nsodt{ \ol}+1 \right), }{ea}
where $c_{3}=c_{3}( c_{0}, \Omega )$. Applying the above estimate in (\ref{sumvlolbl}) we obtain
\[
\ddt \Big( \ndk{\nabk \vl } +\ndk{\nabk \ol } + \ndk{\nabk \bl }  \Big) + \mnit  \Big( \ndk{\nabt \vl } +\ndk{\nabt \ol } + \ndk{\nabt \bl }  \Big)
\]
\eqq{
\le C Q_{2} \n{1+ \kd \oma t }^{\bar{\beta} } \Big( 1+ \nsod{ \vl }^{2}+ \nsod{  \bl }^{2} +  \nsod{ \ol}^{2}  \Big)^{15}, }{sumvlolblbezmu}
where
\[
Q_{2}= \left[ 1+\n{\frac{\oma}{\bmi}}^{3} \right] \left[ \frac{\bmi}{\omi}(1+\bmi^{-3} +\omi^{-3} )^{10} +1\right],  \hd \bar{\beta}= 10\max\{1+\frac{1}{\kd},3\}+\beta
\]
and $C$ depends only on $\Om$, $c_{0}$ and $\kd$.
If we take into account the estimates (\ref{cf})-(\ref{ci}) then we have
\[
\ddt \Big( \nsodk{ \vl } +\nsodk{ \ol } + \nsodk{ \bl }  \Big) + \mnit  \Big( \nsotk{ \vl } +\nsotk{ \ol } + \nsotk{ \bl }  \Big)
\]
\eqq{\leq C Q_{3}\n{1+ \kd \oma t }^{\bar{\beta} } \Big( 1+ \nsod{ \vl }^{2}+ \nsod{  \bl }^{2} +  \nsod{ \ol}^{2}  \Big)^{15}, }{caloscGro}
where $C=C( c_{0}, \Om, \kappa_{2} )$ and $Q_{3}=Q_{1}^{2}+Q_{2}+1$. If we divide both sides by the last term and next integrate with respect time variable then we get
\[
\Big( 1+ \nsod{ \vl(t) }^{2}+ \nsod{  \bl(t) }^{2} +  \nsod{ \ol(t)}^{2}  \Big)^{-14}
\]
\[
\geq \Big( 1+ \nsod{ \vl(0) }^{2}+ \nsod{  \bl(0) }^{2} +  \nsod{ \ol(0)}^{2}  \Big)^{-14} - \frac{14CQ_{3}}{(\bar{\beta}+1) \kd \oma } \n{ (1+\kd \oma t )^{\bar{\beta}+1} -1}
\]
\eqq{
\geq \Big( 1+ \nsod{ v_{0} }^{2}+ \nsod{  b_{0} }^{2} +  \nsod{ \omega_{0}}^{2}  \Big)^{-14} - \frac{14CQ_{3}}{(\bar{\beta}+1) \kd \oma } \n{ (1+\kd \oma t )^{\bar{\beta}+1} -1},
}{dodefts}
where the last estimate is a consequence of Bessel inequality. Now, we define time $\ts$ as the unique solution of  the equality
\eqq{\Big( 1+ \nsod{ v_{0} }^{2}+ \nsod{  b_{0} }^{2} +  \nsod{ \omega_{0}}^{2}  \Big)^{-14} = \frac{15CQ_{3}}{(\bar{\beta}+1) \kd \oma } \n{ (1+\kd \oma \ts )^{\bar{\beta}+1} -1}.}{defts}
We note that $\ts $ is positive and depends on $\nsod{ v_{0} }^{2}+ \nsod{  b_{0} }^{2} +  \nsod{ \omega_{0}}^{2}$, $\kd$, $\Om$, $c_{0}$, $\omi$, $\oma$ and $\bmi$.  It is evident that $\ts$ is decreasing function of $\nsod{ v_{0} }^{2}+ \nsod{  b_{0} }^{2} +  \nsod{ \omega_{0}}^{2} $. Moreover, for any $\delta>0$ and compact $K\subseteq \{(a,b,c): \hd 0<a\leq b, \hd 0<c \}$ there exists $\tskd >0$ such that $\ts \geq \tskd$ for any initial data satisfying $\nsod{ v_{0} }^{2}+ \nsod{  b_{0} }^{2} +  \nsod{ \omega_{0}}^{2} \leq \delta$ and $(\omi, \oma, \bmi)\in K$. From (\ref{defts})  we deduce that $\tskd$ depends only on $\delta$, $K$, $\Om$ $\kd$ and $ c_{0}$.

From (\ref{dodefts}) and (\ref{defts}) we have
\[
\Big( 1+ \nsod{ \vl(t) }^{2}+ \nsod{  \bl(t) }^{2} +  \nsod{ \ol(t)}^{2}  \Big)^{-14} \geq \frac{CQ_{3}}{(\bar{\beta}+1) \kd \oma } \n{ (1+\kd \oma t )^{\bar{\beta}+1} -1}
\]
for $t\in [0,\ts]$ hence,
\eqq{
\nsod{ \vl(t) }^{2}+ \nsod{  \bl(t) }^{2} +  \nsod{ \ol(t)}^{2}   \leq \left[ \frac{CQ_{3}}{(\bar{\beta}+1) \kd \oma } \n{ (1+\kd \oma \ts )^{\bar{\beta}+1} -1}\right]^{-\frac{1}{14}}
}{defCs}
for $t\in [0,\ts]$. In particular, there exists $\cgs=\cgs(\ts)$ such that
\eqq{\| \vl \|_{L^{\infty}(0, \ts; \Vkdd)}+\| \ol \|_{L^{\infty}(0, \ts; \Vd)}+ \| \bl \|_{L^{\infty}(0, \ts; \Vd)} \leq \cgs}{eg}
uniformly with respect to $l\in \NN$. Next, from (\ref{mutmineq}), (\ref{caloscGro}) and  (\ref{eg})  we get the bound
\eqq{\| \vl \|_{L^{2}(0, \ts; \Vkdt)}+\| \ol \|_{L^{2}(0, \ts; \Vt)}+ \| \bl \|_{L^{2}(0, \ts; \Vt)} \leq \cs,}{gh}
where $\cs $ depends on $\ts, \kd, $ $\bmi$, $\oma$ and $\cgs$.
It remains to show the estimate of time derivative of solution. We do this by multiplying the equality (\ref{ca}) by $\clit$ and after summing it over $i$ we get
\[
(\vlt, \vlt ) - (\vl \otimes \vl , \nabla \vlt )+ (\mul D(\vl ), D(\vlt ))=0.
\]
Thus, by after integration by parts and applying H\"older inequality we have
\[
\ndk{\vlt} \le \nd{\divv (\vl \otimes \vl)} \nd{\vlt} + \nd{\nab  \n{\mul D(\vl )}} \nd{\vlt}.
\]
By applying Young inequality we get
\[
\ndk{\vlt} \le 2\ndk{\divv (\vl \otimes \vl)} + 2\ndk{\nab \n{\mul D(\vl )}} .
\]
Next,  H\"older inequality gives us
\[
\ndk{\vlt} \le C \Big(
  \nck{\nab \vl} \nck{\vl}
+ \nck{\nab \mul} \nck{ D(\vl )}
+ \nif{\mul}^2 \ndk{\nab D(\vl )}
\Big).
\]
Finally, Sobolev embedding theorem leads us to the following inequality
\[
\ndk{\vlt} \le C \Big(
  \nsod{\vl}^4
+ \nsodk{\mul} \nsodk{\vl}
\Big),
\]
where $C$ depends only on $\Om$. If we apply (\ref{ea}) and (\ref{eg}) then we get
\eqq{\| \vlt \|_{L^{\infty}(0,\ts;L^{2}(\Om))} \leq \cs, }{gi}
where $\cs $ depends on $\Om, c_{0}, \ts, \kd, $ $\bmi$, $\oma$ and $\cgs$.

Now, we shall consider (\ref{cb}).  Proceeding  as earlier we get
\[
\ndk{\olt} \leq  4\ndk{ \nab \ol \cdot \vl }+ 4\ndk{ \nab ( \mul \nab \ol)}+ 4\kappa_{2}\ndk{ \pht^{2} (\ol)}
\]
\[
\leq 4 \nifk{ \vl } \ndk{ \nab \ol } + 8\nck{ \nab \mul } \nck{ \nab \ol } + 8\nifk{ \mul } \ndk{\nabk \ol}+ 4\kappa_{2}\ncc{\ol},
\]
where we applied (\ref{estiphi}). Thus, using (\ref{ea}) and (\ref{eg}) we get
\eqq{\| \olt \|_{L^{\infty}(0,\ts;L^{2}(\Om))} \leq \cs,  }{gl}
where $\cs$ is as earlier. It remains to deal with  (\ref{cc}). In similar way we obtain
\[
\ndk{ \blt} \leq 4\ndk{ \nab \bl \vl } + 4\ndk{ \nab (\mul \nab \bl )}+ 4\ndk{\psbl \phol } + 4\ndk{ \mul | \dvl|^{2}}
\]
\[
\leq 4\ndk{\nab  \bl } \nifk{ \vl} + 8\nck{ \nab \mul} \nck{ \nab \bl } + 8\nifk{ \mul }\ndk{ \nabk \bl }+ 4\nifk{ \bl }\ndk{ \ol} + 4\nifk{ \mul }\ncc{ \nab \vl}.
\]
Applying again (\ref{ea}) and (\ref{eg}) we obtain
\eqq{\| \blt \|_{L^{\infty}(0,\ts;L^{2}(\Om))} \leq \cs,  }{gm}
where $\cs $ depends on $\Om, c_{0}, \ts, \kd, $ $\bmi$, $\oma$ and $\cgs$.

Now, we prove the higher order estimates for time derivative of approximate solution. Firstly, we multiply the equality (\ref{ca}) by $-\la_{i} \clit$ and sum over $i$
\[
(\vlt, -\lap \vlt ) + (\vl \otimes \vl , \nabla \lap \vlt )- (\mul D(\vl ), D(\lap \vlt ))=0.
\]
After integration by parts we get
\[
\ndk{\nab \vlt }
=
- \n{\lap \n{ \vl \otimes \vl }, \nabla \vlt }
+ \n{\lap \n{\mul D(\vl )}, D(\vlt )}.
\]
If we apply H\"older and Young inequalities, then we get
\[
\ndk{\nab \vlt }
\le
 2\nd{\lap \n{ \vl \otimes \vl }}^2
+ \nd{\lap \n{\mul D(\vl )}}^2,
\]
where we used the equality $ 2\ndk{D(\vlt )} = \ndk{\nab \vlt} $. We estimate further
\[
\ndk{\nab \vlt }
\le
  8\nif{\vl}^2\nd{\nabk \vl}^2
+ 8\ncc{\nab \vl} + 4\nif{\mul}^2 \nd{\lap D(\vl )}^2
\]
\[
+ 16\nt{\nab \mul}^2 \ns{\nab D(\vl )}^2
+ 4\nd{\lap \mul}^2 \nif{D(\vl )}^2.
\]
Using Sobolev embedding we obtain
\[
\ndk{\nab \vlt }
\le  C \Big(
  \nsod{ \vl }^4
+ \nsodk{\mul} \nsodk{\vl}
+ \nsodk{\mul} \nsot{\vl}^2.
\Big),
\]
where $C$ depends only on $\Om$. Applying  (\ref{ea}),  (\ref{eg}) and (\ref{gh}) we get
\eqnsl{
\| \nab \vlt \|_{L^{2}(0, \ts; L^{2}(\Om)} \le \cs,
}{vtEst_2}
where $\cs $ depends on $c_{0}, \Om, \ts, \kd, $ $\bmi$, $\oma$ and $\cgs$. Proceeding analogously we get
\eqnsl{
\| \nab \olt \|_{L^{2}(0, \ts; L^{2}(\Om))} \le \cs.
}{oltEst_1}
It remains to estimate $\nab \blt$. If we  multiply the equality (\ref{cc}) by $-\laf_{i} \dlit$ and sum over $i$, then we get
\[
(\blt, - \lap \blt) + (\bl \vl, \nab \lap \blt   )
- \n{  \mul  \nabla \bl, \nabla \lap \blt  }
\]
\[
=
   (\pst(\bl)\pht( \ol) , \lap \blt)
- ( \mul \dvlk, \lap \blt).
\]
Integrating by parts and  H\"older inequality lead to
\[
\ndk{\nab \blt}
 \le
  \nd{\lap \n{\bl \vl}} \nd{\nab \blt}
+ \nd{ \lap \n{\mul  \nabla \bl}} \nd{\nab \blt}
\]
\[
+   \nd{\nab \n{\pst(\bl)\pht( \ol)}} \nd{\nab \blt }
+ \nd{\nab \n{ \mul \dvlk}} \nd{\nab \blt}.
\]
After applying Young inequality  we get
\[
\ndk{\nab \blt}
\]
\[
 \le
  4 \nd{\lap \n{\bl \vl}}^2
+ 4 \nd{ \lap \n{\mul  \nabla \bl}}^2
+ 4 \nd{\nab \n{\pst(\bl)\pht( \ol)}}^2
+ 4 \nd{\nab \n{ \mul \dvlk}}^2.
\]
Using H\"older inequality we obtain
\eqnsl{
\ndk{\nab \blt}
& \le
  16 \nd{\lap \bl}^2 \nif{\vl}^2
+ 32 \nc{\nab \bl}^2 \nc{\nab \vl}^2
+ 16\nif{\bl}^2 \nd{\nabk \vl}^2 \\
&
+ 16 \nd{ \lap \mul}^2  \nif{\nabla \bl}^2
+ 32 \nc{ \nab \mul}^2  \nc{\nabk \bl}^2
+ 16 \nif{\mul}^2  \nd{\nal \bl}^2  \\
&
+ 8 \nd{\nab ( \pst(\bl))}^2\nif{\pht( \ol)}^2
+ 8 \nif{\pst(\bl)}^2\nd{\nab ( \pht( \ol))}^2 \\
&
+ 8 \ns{\nab \mul}^2 \ns{\dvl}^4
+ 16 \nif{\mul}^2 \nt{\dvl}^2\ns{\nab \dvl}^2.
}{estB_1__}
After applying (\ref{estipsi}) and (\ref{estiphi}) we get $\nif{\pst(\bl)} \le \nif{\bl}$, \hd $\nif{\pst(\ol)} \le \nif{\ol}$ and
\[
\nd{\nab (\pht(\ol)) } = \nd{\pholp\nab \ol} \le c_0  \nd{\nab \ol},
\]
\[
\nd{\nab (\pst(\ol))} = \nd{\psblp\nab \ol} \le c_0  \nd{\nab \bl}.
\]
Using these inequalities in  (\ref{estB_1__}) we obtain
\[
\ndk{\nab \blt}
 \le C \Big(
  \nsodk{\bl} \nsodk{\vl}
+ \nsodk{ \mul}  \nsotk{\bl}
+ \ndk{\nab \bl} \nsodk{\ol}
+ \ndk{\nab \ol} \nsodk{\bl}
\]
\[
+ \nsodk{\mul} \nsod{\vl}^4
+ \nsodk{\mul} \nsodk{\vl} \nsotk{\vl}
\Big),
\]
where $ C = C(\Omega,c_0)$. Finally,  from (\ref{ea}), (\ref{eg}) and  (\ref{gh}) we obtain
\eqnsl{
\| \nab \blt \|_{L^{2}(0, \ts; L^{2}(\Om))} \le \cs ,
}{btEst_4}
where $\cs $ depends on $c_{0}, \Om, \ts, \kd, $ $\bmi$, $\oma$ and $\cgs$. The estimates (\ref{eg})-(\ref{gm}), (\ref{vtEst_2}), (\ref{oltEst_1}) and (\ref{btEst_4}) give (\ref{fa}) and the proof of lemma~\ref{energyhigher} is finished.

\end{proof}

Now, we draw the idea of the remain part of the proof of theorem~\ref{main}. From the $l$-independent estimate (\ref{fa}) we deduce the existence of a subsequence, which converges weakly in some spaces (see (\ref{fb})-(\ref{fd})). Next, by applying Aubin-Lions lemma we get strong convergence of the approximate solution, see (\ref{fe}), (\ref{ff}). Further, we prove the convergence of ''diffusive coefficient'' $\mul$ (\ref{ga}), which allows us to take the limit in the approximate problem. As a result, we obtain (\ref{gb})-(\ref{ccg}). In the last step we prove a series of inequalities (\ref{gc})-(\ref{le}), (\ref{ge}), (\ref{gf}), which show that the truncated problem is in fact the original one.

Having the estimate (\ref{fa}) from lemma~\ref{energyhigher} we may apply weak-compactness argument to the sequence of approximate solutions and we obtain a subsequence (still  numerated by superscript $l$) weakly convergent in appropriate spaces. To be more precise, there exist $v$, $\om$ and $b$ such that
\[
v \in L^{2}(0,\ts;\Vkdt)\cap L^{\infty}(0,\ts;\Vkdd), \hd  \vt \in L^{2}(0,\ts;H^{1}(\Om))
\]
\[
\om, b \in L^{2}(0,\ts;\Vt)\cap L^{\infty}(0,\ts;\Vd), \hd  \ot , \bt \in L^{2}(0,\ts;H^{1}(\Om))
\]
and
\eqq{\vl \sla v \m{ in } L^{2}(0,\ts;\Vkdt), \hd \hd  \vl \slg v \m{ in }   L^{\infty}(0,\ts;\Vkdd), \hd \hd \vlt \sla \vt  \m{ in }  L^{2}(0,\ts;H^{1}(\Om)), }{fb}
\eqq{ (\ol, \bl ) \sla (\om, b) \m{ in }  L^{2}(0,\ts;\Vt), \hd (\ol, \bl ) \slg (\om, b ) \m{ in } L^{\infty}(0,\ts;\Vd),   }{fc}
\eqq{(\olt, \blt) \sla (\ot , \bt ) \m{ in } L^{2}(0,\ts;H^{1}(\Om)).}{fd}
Thus, by the Aubin-Lions lemma there exists a subsequence (again denoted by $l$) such that
\eqq{(\vl, \ol, \bl ) \zb (v, \om , b ) \hd \m{ in } \hd L^{2}(0,\ts;H^{s}(\Om)) \hd \m{ for } \hd s< 3,}{fe}
and
\eqq{(\vl, \ol, \bl ) \zb (v, \om , b ) \hd \m{ in } \hd C([0,\ts];H^{q}(\Om)) \hd \m{ for } \hd q < 2.}{ff}
Now, we characterize the limits of nonlinear terms. Firstly, we note that for fixed $(x,t)$ we may write
\[
\Pst(\bl (x,t)) - \Pst(b(x, t) )= \int_{0}^{1} \frac{d}{ds} \left[ \Pst \left(s \bl (x,t) +(1-s) b(x, t) \right)  \right] ds
\]
\[
= \int_{0}^{1}  \Pst' (s \bl (x,t )  +(1-s) b(x, t) )  ds \cdot [\bl (x,t)- b(x,t)].
\]
Taking into account (\ref{estiPsi}) we get
\[
|\Pst(\bl (x,t)) - \Pst(b(x, t) )|\leq c_{0} |\bl (x,t)- b(x,t)|.
\]
Similarly we obtain
\[
|\Pht(\ol (x,t)) - \Pht(\om(x, t) )|\leq c_{0} |\ol (x,t)- \om(x,t)|.
\]
and
\[
|\Pht (b(x,t))| \leq c_{0} (|b(x,t)|+ \bmt).
\]
Therefore, applying (\ref{defPhi})  we obtain
\[
\left| \frac{\Psbl}{\Phol}- \frac{\Pst( b)}{ \Pht( \om) }\right|\leq 4\omtd \left[ |\Pht( \om) | | \Psbl - \Pst (b)|+ |\Pst(b )| |\Pht ( \om) - \Phol| \right]
\]
\[
\leq 4 \omtd \left[ 2 \oma |\bl - b |+ c_{0} (|b|+ \bmt)|\om - \ol|  \right].
\]
From (\ref{ff}) and the above estimate  we have
\eqq{\mul \zb \mup\equiv \frac{\Pst( b)}{ \Pht( \om)}  \hd \m{ uniformly on } \hd \overline{\Om } \times [0,\ts]. }{ga}
Now, we shall take the limit $l\rightarrow \infty$ in the system (\ref{ca})-(\ref{cc}). First, we multiply (\ref{ca}) by $a_{i}$ and sum over $i \in \{1,\dots , l \}$ and after integrating with respect time variable we get
\[
\itt (\vlt, w)dt - \itt (\vl\otimes \vl,\nabla w)dt  +   \itt  \n{ \mul  D(\vl), D(w)}dt  = 0,
\]
where $w= \sum\limits_{i=1}^{l}a_{i}w_{i}$ and $t\in (0,\ts)$. We note that from (\ref{ff}) we have for some $ \lambda > 0$
\eqq{(\vl, \ol, \bl ) \zb (v, \om , b ) \hd \m{ in } \hd C([0,\ts];C^{0,\lambda}(\overline{\Om}))}{ffa}
hence, (\ref{fd}), (\ref{ff}) and (\ref{ga}) imply that
\[
\itt (\vt, w)dt - \itt (v\otimes v,\nabla w)dt  +   \itt  \n{ \mup  D(v), D(w)}dt  = 0
\]
for $t\in (0,\ts)$ and $w= \sum\limits_{i=1}^{l}a_{i}w_{i}$. By density, the above identity holds for $w\in \Vkdj$. As a consequence, we obtain
\[
\int_{t_{1}}^{t_{2}} (\vt, w)dt - \int_{t_{1}}^{t_{2}} (v\otimes v,\nabla w)dt  +   \int_{t_{1}}^{t_{2}} \n{ \mup  D(v), D(w)}dt  = 0
\]
for $0<t_{1}<t_{2}<\ts$ and  $w\in \Vkdj$. After dividing both sides by $|t_{2}-t_{1}|$ and taking the limit $t_{2}\rightarrow t_{1}$ we get
\eqq{(\vt , w) -  (v\otimes v,\nabla w)  +     \n{ \mup  D(v), D(w)}   =0 \hd \m{ for } \hd w\in \Vkdj}{gb}
for a.a. $t\in (0,\ts)$. Further, we have
\[
\psbl \longrightarrow \pst( b), \hd \hd \phol \zb \pht (\ol) \hd \m{ uniformly on } \hd \overline{\Om } \times [0,\ts]
\]
thus, using  (\ref{cb}) and (\ref{cc}) and  arguing as earlier we obtain
\eqq{ (\ot , z) - (\om v, \nabla z  ) + \n{ \mup \nabla \om, \nabla z  } = -  \kappa_{2}(\pht^{2}(\om), z ) \hd \m{ for } \hd z\in \Vj,  }{cbg}
\eqq{(\bt , q) - (b v, \nab q   )   +\n{  \mup  \nabla b, \nabla q  }  = - (\pst(b)\pht( \om) , q) + ( \mup \bk{D(v)}, q)  \m{ for }  q\in \Vj   }{ccg}
for a.a. $t\in (0,\ts)$.

Now, we shall prove the bounds for $b$ and $\om$. The proof is similar to one found in \cite{MiNa}. We denote by $\bp $ ($\bm$) the positive (negative resp.) part of $b$. Then $b=\bp + \bm $. We shall show that
\eqq{b\geq 0 \hd \m{ in }  \hd \overline{\Om } \times [0,\ts].}{gc}
For this purpose we test the equation (\ref{ccg}) by $\bm$ and we obtain
\[
(\bt , \bm ) - (b v, \nab \bm    )   +\n{  \mup  \nabla b, \nabla \bm   }  = - (\pst(b)\pht( \om) , \bm ) + ( \mup \bk{D(v)}, \bm ).
\]
We note that from (\ref{ga}) we have $0\leq \mup$ and by (\ref{defpsi}) we obtain $\pst(b)\bm\equiv 0$ thus, we get
\[
({\partial_{t} \bm}, \bm ) - (\bm v, \nab \bm    )   +\n{  \mup  \nabla \bm, \nabla \bm   }  \leq 0
\]
and then
\[
\ddt \ndk{ \bm } \leq 0.
\]
By the assumption (\ref{f}) the negative part of initial value of $b$ is zero hence, $\bm \equiv 0$ and we obtained (\ref{gc}).

Proceeding similarly we introduce the decomposition $\om = \op + \ou$ and test the equation (\ref{cbg}) by $\ou$
\[
(\ot, \ou) - (\om v, \nabla \ou  ) + \n{ \mup \nabla \om, \nabla \ou  } = -  (\pht^{2}(\om), \ou ).
\]
We note that by  $(\ref{defphi})$ the right-hand side of the above equality vanishes thus, we get $\ddt \ndk{ \ou } \leq 0$ and by assumption (\ref{g})
\eqq{\om\geq 0 \hd \m{ in }  \hd \overline{\Om } \times [0,\ts].}{gd}
Now, we shall prove that
\eqq{\om(x,t)\geq \omitt \hd \m{ for } \hd (x,t)\in \overline{\Om } \times [0,\ts].   }{le}
We test the equation (\ref{cbg}) by $(\om - \omt)_{-}$ and we obtain
\[
(\ot, (\om - \omt)_{-})  - (\om v, \nabla (\om - \omt)_{-}  ) + \n{ \mup \nab \om, \nab \n{\om - \omt}_{-}  }
\]
\eqq{
 = -  \kd (\pht^{2}(\om), (\om - \omt)_{-} ).
}{rra}
Using (\ref{h}) we get
\[
(\ot, (\om - \omt)_{-}) = \jd \ddt \ndk{(\om - \omt)_{-}}  - \kd \n{ (\omt)^{2} ,(\om - \omt)_{-}}
\]
hence, using inequality $ 0\leq \mup $  and $\divv v=0$ in (\ref{rra}) we obtain
\[
\jd  \ddt \ndk{(\om - \omt)_{-}} - \kd \n{ (\omt)^{2} ,(\om - \omt)_{-}} \leq -  \kd (\pht^{2}(\om), (\om - \omt)_{-} ) .
\]
We write the above inequality the form
\[
\jd  \ddt \ndk{(\om - \omt)_{-}}  \leq   -\kd ((  \pht(\om)-\omt)(\pht(\om)+\omt), (\om - \omt)_{-} ).
\]
We note that  $-\kd ((\pht(\om)+\omt), (\om - \omt)_{-} )$ is nonnegative thus, using (\ref{estiphi}) we get $\pht(\om)\leq \om$ we have
\[
\jd  \ddt \ndk{(\om - \omt)_{-}}  \leq   -\kd ((  \om-\omt)(\pht(\om)+\omt), (\om - \omt)_{-} )
\]
\[
=-\kd \big( (\pht(\om)+\omt), \bk{(\om - \omt)_{-}} \big)\leq 0.
\]
Therefore, we obtain  $\ddt \ndk{ (\om - \omt)_{-} }\leq 0$ and by (\ref{g}) we get (\ref{le}).\\
Now, we shall prove that
\eqq{\om(x,t)\leq \omat \hd \m{ for } \hd (x,t)\in \overline{\Om } \times [0,\ts].   }{ge}
Indeed, firstly  we note that from (\ref{h}), (\ref{defphi}) and (\ref{le}) we have
\eqq{\pht(\om)=\om}{newa}
hence, if  we test the equation (\ref{cbg}) by $(\om - \ommt)_{+}$ then  we obtain
\[
(\ot, (\om - \ommt)_{+})  - (\om v, \nabla (\om - \ommt)_{+}  ) + \n{ \mup \nab \om, \nab \n{\om - \ommt}_{+}  }
\]
\[
= -  \kd (\om^{2}, (\om - \ommt)_{+} ).
\]
Proceeding as earlier, we get
\[
\jd \ddt \ndk{(\om - \ommt)_{+}}  -\kd \n{ (\ommt)^{2} ,(\om - \ommt)_{+}  }
 \le -  \kd(\om^{2}, (\om - \ommt)_{+} ).
\]
and
\[
\jd  \ddt \ndk{(\om - \ommt)_{+}}  \leq   -\kd ((  \om-\ommt)(\om+\ommt), (\om - \ommt)_{+} )
\]
\[
= - \kd ((\om+\ommt), |(\om-\ommt)_{+}|^{2}  )
\]
hence, we obtain
\eqns{
\jd & \ddt \ndk{(\om - \ommt)_{+}}  \le 0.
}
By (\ref{g}) we get (\ref{ge}).
We  shall prove that
\eqq{b(x,t) \geq \bmt \hd \m{ for } \hd (x,t)\in \overline{\Om } \times [0,\ts].  }{gf}
For this purpose we test the equation (\ref{ccg}) by $(b - \bmt)_{-}$. Then we get
\[
(\bt, (b - \bmt)_{-}) - (b v, \nab ((b - \bmt)_{-} )  )   +\n{  \mup  \nab b, \nab ((b - \bmt)_{-})  }
\]
\[
 = - (\pst(b) \om , (b - \bmt)_{-}) + ( \mup \bk{D(v)}, (b - \bmt)_{-}).
\]
The first term on the left-hand side is equal to
\[
\jd \ddt \ndk{ (b - \bmt)_{-}} - \n{\frac{\oma \bmi}{\n{1 + \oma \kd t}^{\frac{1}{\kd} + 1}}, (b - \bmt)_{-}}.
\]
The second term of the left-hand side vanishes and the third is nonnegative. Thus, we get
\[
\jd \ddt \ndk{ (b - \bmt)_{-}}
 - \n{\frac{\oma \bmi}{\n{1 + \oma \kd t}^{\frac{1}{\kd} + 1}}, (b - \bmt)_{-}}
\leq - (\pst(b) \om , (b - \bmt)_{-}).
\]
Using  (\ref{ge}) we get
\[
\jd \ddt \ndk{ (b - \bmt)_{-}}
 - \n{\frac{\oma \bmi}{\n{1 + \oma \kd t}^{\frac{1}{\kd} + 1}}, (b - \bmt)_{-}}
 \]
 \[
\leq - \frac{\oma}{1 + \oma \kd t} (\pst(b) , (b - \bmt)_{-})
\]
and by definition (\ref{h}) we obtain
\[
\jd \ddt \ndk{ (b - \bmt)_{-}}
\leq - \frac{\oma}{1 + \oma \kd t} (\pst(b) - \bmt, (b - \bmt)_{-}).
\]
From (\ref{gc}) and  (\ref{estipsi}) we have $\pst(b)\leq b$ so, we obtain
\[
\jd \ddt \ndk{ (b - \bmt)_{-}}
\leq - \frac{\oma}{1 + \oma \kd t} (b - \bmt, (b - \bmt)_{-})
\]
\[
= - \frac{\oma}{1 + \oma \kd t} \ndk{ (b-\bmt)_{-} }
\]
and then  $\ddt \ndk{ (b - \bmt)_{-} }\leq 0$. Using (\ref{f}) and (\ref{h}) we get  (\ref{gf}).

Note that from (\ref{defpsi}) and (\ref{gf}) we get
\eqq{\pst (b)=b.
}{newb}
Further, (\ref{defPsi}) and  (\ref{gf})  give $\Pst (b)=b$. Finally, (\ref{h}), (\ref{defPhi}), (\ref{le}) and (\ref{ge}) yield  $\Pht(\om) = \om$. Thus,
\eqq{
\mup = \frac{\Pst (b)}{\Pht (\om)} = \frac{b}{\om}.
}{ha}
Applying (\ref{newa}), (\ref{newb}) and (\ref{ha})  we deduce that system (\ref{gb})-(\ref{ccg}) has the following form
\eqq{(\vt, w) -  (v\otimes v,\nabla w)  +     \n{ \frac{b}{\om}  D(v), D(w)}   =0 \hd \m{ for } \hd w\in \Vkdj,}{gbh}
\eqq{ (\ot, z) - (\om v, \nabla z  ) + \n{ \frac{b}{\om} \nabla \om, \nabla z  } = - \kd (\om^{2}, z ) \hd \m{ for } \hd z\in \Vj,  }{cbgh}
\eqq{(\bt, q) - (b v, \nab q   )   +\n{  \frac{b}{\om}  \nabla b, \nabla q  }  = - (b \om , q) + \left( \frac{b}{\om} \bk{D(v)}, q\right) \hd \m{ for } \hd q\in \Vj   }{ccgno}
for a.a. $t\in (0,\ts)$.

\section{Appendix }

The function $\Pst $ may be defined as follows. We set $f(x)= e^{-1/x}$ for $x>0$ and zero elsewhere. We put $g(x)= x-e^{-1/x}$ for $x<0$ and $g(x)=x$ for $x>0$. Then we set
\[
\tilde{\eta}(x) = \frac{1}{c} \int_{0}^{x} f(y)f(-y+1)dy,
\]
where $c=\int_{0}^{1} f(y)f(-y+1)dy$. Function $\tilde{\eta } $ is smooth function, which vanishes for negative $x$ and is equal to one for $x>1$. Next, we put
\[
\eta(x)= \tilde{\eta}(2(x-\frac{1}{4})), \hd \hd h(x)= (1- \eta(x))f(x)+ \eta(x)g(x).
\]
Finally, we define
\eqq{\Pst(x) = \frac{\bmt}{2} + \frac{\bmt}{2} h\left(  \frac{2}{\bmt} \left( x- \frac{\bmt}{2} \right) \right).    }{defPsik}
{\bf{Acknowledgements}} The authors would like to thank the anonymous referee for valuable remarks, which significantly improve the paper.

\end{document}